\documentclass[a4paper,11pt]{article}
\usepackage{amsfonts,amsmath,amssymb,amsthm,enumerate,bm}
\usepackage[a4paper,text={6.5in,9in},centering]{geometry}
\usepackage{xcolor}
\allowdisplaybreaks[4] 
\theoremstyle{plain}
\newtheorem{theorem}{\noindent\bf Theorem}[section]
\newtheorem{corollary}[theorem]{\noindent\bf Corollary}
\newtheorem{proposition}[theorem]{\noindent\bf Proposition}
\newtheorem{lemma}[theorem]{\noindent\bf Lemma}

\theoremstyle{remark}

\newtheorem{remark}[theorem]{\noindent\bf Remark}

\numberwithin{equation}{section}

\def\be{\begin{eqnarray}}
\def\ee{\end{eqnarray}}
\def\ben{\begin{eqnarray*}}
\def\een{\end{eqnarray*}}
\def\benum{\begin{enumerate}}
\def\eenum{\end{enumerate}}
\newcommand{\lr}{\left(}
\newcommand{\rr}{\right)}

\newcommand{\lge}{\left\{ }
\newcommand{\rge}{\right\} }

\title{\bf {Rubio de Francia Extrapolation Theorem for Quasi non-increasing Sequences} }
\author{
{\bf Monika Singh} \\
Department of Mathematics\\
Lady Shri Ram College for Women
(University of Delhi)\\
Lajpat Nagar, New Delhi-110 024, India\\
 {\bf Email:} monikasingh@lsr.du.ac.in\\[3mm]
 {\bf Amiran Gogatishvili\footnote{Corresponding author}}\\
Institute of Mathematics of the Czech Academy of Sciences\\
Zitna 25, 110 00 Prague 1\\ Czech Republic \\
 {\bf Email:} gogatish@math.cas.cz\\[3mm] 
 {\bf Rahul Panchal} \\
Department of Applied Science and Humanities\\
School of Engineering and Technology\\ 
Vivekananda Institute of Professional Studies - Technical Campus\\ 
Pitampura, New Delhi, 110 034, India\\
{\bf Email:}   rahulpanchalmaths@gmail.com\\[3mm]
{\bf Arun Pal Singh}\\
Department of Mathematics \\
Dyal Singh College
(University of Delhi)\\ Lodhi Road, New Delhi- 110 003, India\\
{\bf Email:} arunpalsingh@dsc.du.ac.in \\[3mm]
}

\date{}

\begin{document}
\maketitle

\begin{abstract}
\noindent We prove the discrete Rubio de Francia extrapolation theorem for a pair of quasi non-increasing sequences with $\mathcal{QB}_{\beta, p}$ weight class. Also, a weight  characterization of the boundedness of the generalized discrete Hardy averaging operator on the class of quasi non-increasing sequences from $l_w^p(\mathbb{Z}^+)$ is proved.

\bigskip\noindent
2020 \emph{AMS Subject Classification.} 26D10, 26D15, 47B37

\noindent \emph{Keywords and Phrases.} Discrete averaging operator; power rule; weight sequences; quasi non-increasing sequences; non-negative sequences; Rubio de Francia extrapolation.
\end{abstract}


     \section{Introduction}

     By a weight sequence $\{w(k)\}_{k \in \mathbb Z^+},$ we mean a non-negative sequence of real numbers. We say that a weight sequence $\{w(k)\}_{k \in \mathbb Z^+}$ is in the Ari\~{n}o -Muckenhoupt discrete weight class $\mathcal B_p$ (see \cite{bterd}),  if there exists a constant $c>0$ such that
\[
\sum_{k=n}^\infty \left( \frac{n}{k}\right)^pw(k) \le c \sum_{k=1}^n w(k) ~ \text{for all $n \in \mathbb{Z}^+$}.
\]
The discrete $\mathcal B_p$ class of weights is one of the important weight class as it characterizes the boundedness of the  discrete Hardy averaging operator 
\begin{equation}\label{s3}
		(\mathcal{A}y)(n) := \frac{1}{n}\sum_{k=1}^{n} y(k),
	\end{equation}
on a class of non-negative non-increasing sequences  $\{y(k)\}_{k \in \mathbb Z^+}$.  Precisely, we have the following:

\vspace{2pt}

\noindent {\bf Theorem A. \cite{bterd}} \emph{Let $1\le p<\infty.$   Then there is a constant $C>0$ such that}
	    	\be \label{12}
	    	\nonumber \sum_{k=1}^{\infty}(\mathcal{A}y)^p(k) w(k) \le C\sum_{k=1}^{\infty}y^p(k) w(k)
	    	\ee
\emph{holds for all non-negative non-increasing sequences $\{y(k)\}_{k \in \mathbb Z^+}$ if and only if $\{w(k)\}_{k \in \mathbb Z^+}\in \mathcal{B}_p.$}
 
\vspace{2pt}

Recently, in \cite{sa}, Saker and Agarwal have proved the \emph{discrete Rubio de Francia extrapolation theorems} for the class of non-negative non-increasing sequences by using the discrete weight class $\mathcal B_p$.
Precisely, they prove the following:

\vspace{2pt}

\noindent {\bf Theorem B. \cite{sa}} \emph{Let $\varphi$ be an increasing function on $(0, \infty)$ and  $0 < p_0 < \infty.$ Suppose that for every $\{w(k)\}_{k \in \mathbb Z^+} \in \mathcal B_{p_0} $} 
\be 
	\nonumber \sum_{k=1}^{\infty} f^{p_0}(k)w(k)\le \varphi (\mathcal{B}_{p_0}(w))\sum_{k=1}^{\infty} g^{p_0}(k)w(k)
	\ee
\emph {holds for every non-negative non-increasing sequences $\{f(k)\}_{k \in \mathbb Z^+}~ and ~\{g(k)\}_{k \in \mathbb Z^+}$. Then, for every $0 < p < \infty$ and $\{w(k)\}_{k \in \mathbb Z^+} \in \mathcal{B}_p$ the following holds}
\be 
	\nonumber\sum_{k=1}^{\infty} f^p(k)w(k)\le \varphi^* (\mathcal{B}_p(w)) \sum_{k=1}^{\infty} g^p(k)w(k),
	\ee
	{\it where $\varphi^* (\mathcal{B}_p(w))$ is as given in} \cite{sa} {\it and $\mathcal{B}_p(w)$ is a constant defined as }
\[
\mathcal{B}_p(w) := \inf \left\{ c \ge 1 :\sum_{k=1}^n w(k) + \sum_{k=n}^\infty \left(\frac{n}{k}\right)^{ p}w(k)\le c\sum_{k=1}^n w(k), ~  \text{for all}~~  n \in \mathbb Z^+ \right\}.\]
 
The continuous version of Theorem B was proved by Carro and Lorente  in 2010 \cite{cl} 
for non-negative non-increasing measurable functions with $B_p$ class of weights. The  $B_p$ weight class is a continuous version of the discrete $\mathcal B_p$ class of  weight sequences. A weight $w$ is said to belong to the class $B_p ~(p>0),$  if there exists a constant $c>0$ such that the inequality 
\[
\int_r^\infty \lr\frac{r}{x}\rr^p w(x)dx \leq c\int_0^r w(x)dx
\]
holds for every $r>0.$ Let us write
\begin{equation*}\label{e1.1}
[w]_{B_p} := \inf \left\{ c \ge 1 :  \int_0^r w(x)dx+\int_r^\infty \lr\frac{r}{x}\rr^p w(x)dx \le c\int_0^r w(x)dx, \,r>0 \right\}.
\end{equation*}
One of the important properties of the $B_p$class of weights (see \cite{cl, crs}) is an open ended property which states that: if $w\in B_p ~(p>0),$ then there exists $\varepsilon>0$ such that $w\in B_{p-\varepsilon}.$ Moreover,
\begin{equation}\label{e11}
[w]_{B_{p-\varepsilon}}\le \frac{c[w]_{B_p}}{1-\varepsilon \alpha^p [w]_{B_p}}
\end{equation}
where $0<\alpha <1$ is the universal constant and $\varepsilon$ is such that $1-\varepsilon \alpha^p [w]_{B_p}>0,$ which is instrumental in proving the Carro and Lorente extrapolation result given below.

\vspace{2pt}

\noindent {\bf Theorem C. \cite{cl}}  {\it
Let $\varphi$ be an increasing  function defined on $(0,\infty),~ (f,g)$ be a pair of positive non-increasing  functions defined on $(0,\infty)$ and $0 < p_0<\infty.$ Suppose that for every $w\in B_{p_0},$ the inequality
\[
\int_0^\infty f^{p_0}(x)w(x)dx \leq \varphi([w]_{B_{p_0}})\int_0^\infty g^{p_0}(x)w(x)dx
\]
holds. Then for all $0 < p<\infty$ and $w \in B_{p},$ the following holds:
\[
\int_0^\infty f^p(x)w(x)dx \leq \tilde\varphi([w]_{B_p})\int_0^\infty g^p(x)w(x)dx,
\]
where 
$$
\tilde\varphi([w]_{B_p}) := \inf_{0<\varepsilon<\frac{p_0}{p\alpha^p [w]_{B_p}}} \varphi \lr \frac{p_0}{\varepsilon} \rr^{p/p_0} \frac{c[w]_{B_p}}{1-  \frac{\varepsilon p}{p_0}  \alpha^p [w]_{B_p}      }
$$
with $c$ as in \eqref{e11}.
}\\
An important intermediary to establish Theorem C has been the boundedness of the generalized Hardy averaging operator for non-negative non-increasing functions, see \cite{cl, cs}.\\

The results obtained in \cite{cl} are parallel to those of the extrapolation results of Rubio de Francia for non-negative measurable functions done with the weights from the $A_p$ class of weights (see \cite{mu2}), popularly known as the Muckenhoupt weight class. It is an important weight class, which characterizes the boundedness of the Hardy Littlewood maximal operator  \cite{mu2}, and the Riesz potential operator  \cite{m} in Lebesgue spaces and in grand Lebesgue spaces \cite{fgj, is, m}. In fact, it is due to the Spanish mathematician Jose L. Rubio de Francia who initiated and developed an  extrapolation theory in Lebesgue spaces during 1982-85 \cite{rdf, rdf1, rdf2}, which is now known after his name. For a detailed study of this theory, see \cite{dc, dcump1, dcump2}. \\

In 1994, Bergh, Burenkov and Persson \cite{bbp} investigated  Hardy's inequality for quasi-monotone functions on Lebesgue spaces with power-type weights.  For $~\beta\in \mathbb{R},$ a measurable function $f$ is said to belong to $Q_\beta,$ \emph{a class of quasi non-negative  non-increasing  functions},  if $x^{-\beta}f(x)$ is non-increasing. Clearly, when $\beta=0, $ it becomes the class of non-negative non-increasing functions. 
In \cite{malig}, the authors have exclusively studied weighted inequalities for quasi monotone functions. Later, the Hardy's inequality for the class of quasi non-negative non-increasing functions with the general weights was proved in \cite{pma}. The following was proved:

\vspace{2pt}

 \noindent {\bf Theorem D. \cite{pma}} \emph{ For $~1\leq p<\infty ,~\beta >-1,$ the inequality
\[
\lr \int_0^\infty {\lr \frac{1}{x}\int_0^x f(t)dt \rr}^p w(x)dx\rr \leq C\int_0^\infty f^p(x) w(x)dx, 
\]
holds for all $f\in Q_{\beta}$ if and only if $w\in QB_{\beta,p},$ i.e., the weight $w$ satisfies the condition:
\be \label{eqn aa}
\nonumber\int_r^\infty \lr\frac{r}{x}\rr^p w(x)dx \leq C\int_0^r {\lr \frac{x}{r} \rr}^{\beta p} w(x)dx,~ r>0.
\ee}

Note that for $\beta = 0,$ the weight class $QB_{\beta,p}$ reduces to the class $B_p.$ 
In 2023, the authors \cite{arpm} extended the Carro and Lorente extrapolation result to the class of non-negative quasi non-increasing functions.

Motivated by the above works, our main objective in this paper is to prove the Rubio de Francia extrapolation theorem for non-negative, quasi non-increasing sequences. To begin with, in Section 2, we provide some preliminaries and known results that will be used in the sequel and establish Hardy's inequality for the generalized Hardy averaging operator acting on non-negative, quasi non-increasing sequences. In Section 3, we establish an open-ended property for the weight class $\mathcal{QB}_{\beta, p},$ which is used to finally establish the Rubio de Francia extrapolation result for quasi non-increasing sequences.

\section{Discrete generalized Hardy averaging operator}
In this section, we give  some basic terminologies and lemmas that will be used throughout the paper. There after, we give the characterization for the boundedness of the discrete generalized Hardy operator in the framework of the Lebesgue sequence spaces for the class of quasi non-increasing sequences. We begin with the following:\\
For $\beta \ge -1$ {and $p>0,$ \emph{a weight sequence $\{w(k)\}_{k \in \mathbb Z^+}$ is said to be in the class $ \mathcal{QB}_{\beta, p}$} if for some $c>0,$ it satisfies the following inequality 
\[
\sum_{k=n}^\infty \left( \frac{n}{k}\right)^pw(k) \le c \sum_{k=1}^n \left(\frac{k}{n}\right)^{\beta p}w(k) ~ \text{for all $n \in \mathbb{Z}^+$}.
\] 

\noindent If  $\beta=0,$ we shall denote $\mathcal{B}_{0,p}=:\mathcal{B}_{p},$  the Ari\~{n}o -Muckenhoupt class of weights. Note that  
$$\{w(k)\}_{k \in \mathbb Z^+} \in \mathcal{QB}_{\beta, p} \Leftrightarrow \{k^{\beta p}w(k)\} \in \mathcal{B}_{(\beta +1) p} .$$ 

\noindent For a weight sequence $\{w(k)\}_{k \in \mathbb Z^+} \in \mathcal{QB}_{\beta, p},$ we define it's $\mathcal{QB}_{\beta, p}$ constant as follows :
\[
[w]_{\mathcal{QB}_{\beta, p}} := \inf \left\{ d \ge 1 :\sum_{k=1}^n\left(\frac{k}{n}\right)^{\beta p}w(k) + \sum_{k=n}^\infty \left(\frac{n}{k}\right)^{ p}w(k)\le d\sum_{k=1}^n\left(\frac{k}{n}\right)^{\beta p}w(k)\right\}.\]
\noindent For $\beta \ge -1,$ a sequence $\{f(k)\}_{k\in \mathbb{Z}^+}$ is said to be in the class $ \mathcal{Q}_\beta$ if the sequence $\{k^{-\beta}f(k)\}_{k\in \mathbb{Z}^+}$ is non-increasing, we shall call such sequences to be \emph{quasi non-increasing} sequences. Quasi non-increasing sequences were initially defined in \cite{robert, shah}.

\vspace{2 pt}

\noindent For a given non-negative sequence  $\{\psi (k)\}_{k\in \mathbb{Z}^+},$  the \emph{discrete generalized Hardy averaging operator} is defined as
\begin{equation*}
	(\mathcal{A}_\psi f)(k) := \frac{1}{\Psi (k)}\displaystyle\sum_{\tau=1}^{k}f(\tau)\psi (\tau),
\end{equation*}
where $\Psi (k):= \displaystyle \sum_{\tau=1}^k \psi (\tau)$ and $\{f(k)\}_{k\in \mathbb{Z}^+} $ is a non-negative sequence.

\noindent Throughout this paper, we will often rely on the lemmas stated below. 

\vspace{2 pt}

\noindent{\bf Lemma E. \cite{gerd} (Power Rule I)}
	\emph{Let  $0 \le q < \infty$ and $\{f(k)\}_{k\in \mathbb{Z}^+}$ be a non-negative sequence. Then the following holds:}
$$	\min\{1,q\} \sum_{k=1}^{n}f(k)\left( \sum_{j=1}^{k}f(j)\right)^{q-1} \le \left(\sum_{k=1}^{n}f(k)\right)^q \le \max\{1,q\}\sum_{k=1}^{n}f(k)\left( \sum_{j=1}^{k}f(j)\right)^{q-1}
	$$
	for $n\in \mathbb{Z}^+.$
	
\vspace{2pt}

\noindent In particular, taking $\{f(k)\}_{k\in \mathbb{Z}^+} = \{1\}_{k\in \mathbb{Z}^+}$ in Lemma E (Power Rule), we get the following estimate:
$$
\min\{1,q\}\sum_{k=1}^{n}k^{q-1} \le n^q \le \max\{1,q\}\sum_{k=1}^{n}k^{q-1}.
$$

\vspace{0.1cm} 

\noindent{\bf Lemma F. \cite{GBKKU} (Power Rule II)}
	\emph{Let  $0 \le q <\infty$ and $\{f(k)\}_{k\in \mathbb{Z}^+}$ be a non-negative sequence such that $\sum_{j=1}^{\infty }f(j) =\infty$. Then the following holds:}
$$	\min\lge1,\frac{1}{q}\rge \left(\sum_{k=1}^{n}f(k)\right)^{-q}\le   \sum_{k=n}^{\infty}f(k+1)\left( \sum_{j=1}^{k}f(j)\right)^{-q} \left( \sum_{j=1}^{k+1}f(j)\right)^{-1}\le  \max\lge 1,\frac{1}{q}\rge\left(\sum_{k=1}^{n}f(k)\right)^{-q}
	$$
	for $n\in \mathbb{Z}^+.$\\
In particular, taking  $\{f(k)\}_{k\in \mathbb{Z}^+} = \{1\}_{k\in \mathbb{Z}^+}$ in Lemma F, we obtain the following:
$$
\min\lge 1,\frac{1}{q}\rge n^{-q} \le \sum_{k=n}^{\infty} \frac{1}{k^{q}(k+1)} \le\max\lge1,\frac{1}{q}\rge n^{-q}.
$$

We now prove a lemma which is of independent interest.

\begin{lemma} 
For $n<k$, $m\ge0,$ the following holds:
$$
\ \frac{1}{2(m+1)}\left(\ln\left(\frac{k}{n}\right)\right)^{m+1}  \le \sum_{i=n+1}^{k} \frac{ \left(\ln \left(\frac{i}{n}\right)\right)^{m}}{i} \le \frac{2^{m+2}}{(m+1)}\left(\ln\left(\frac{k}{n}\right)\right)^{m+1}.
$$
\end{lemma}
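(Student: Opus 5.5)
Write $g(x) := \frac{(\ln(x/n))^m}{x}$ for $x \ge n$. Then $g$ is the derivative of $G(x) := \frac{1}{m+1}(\ln(x/n))^{m+1}$, so the integral is exactly $\int_n^k g(x)\,dx = G(k)$. The plan is to compare the sum $\sum_{i=n+1}^k g(i)$ with this integral term by term, using the interval $[i-1,i]$ for the lower bound and the interval $[i,i+1]$ for the upper bound.

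\emph{Upper bound.} For $x\in[i-1,i]$ with $i\ge n+1$ we have $\ln(x/n)\le \ln(i/n)$, but we need the reverse direction. So we bound the $i$-th term by an integral over the \emph{right} interval $[i,i+1]$. For $x\in[i,i+1]$ we have $x\le i+1\le 2i$, hence $\frac1i\le \frac{2}{x}$, and $\ln(i/n)\le \ln(x/n)$. Therefore
$$g(i)\le 2\int_i^{i+1}\frac{(\ln(x/n))^m}{x}\,dx,$$
and summing gives $\sum_{i=n+1}^k g(i)\le 2\,G(k+1)$. Since $n<k$, we have $k+1\le 2k$, and this needs to be compared with $\ln(k/n)$. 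The key inequality is $\ln\frac{k+1}{n}\le 2\ln\frac kn$. It follows from $\frac{k+1}{n}\le \frac{k^2}{n^2}$, i.e.\ $n(k+1)\le k^2$, which holds because $k\ge n+1$ gives $k^2\ge k(n+1)\ge n(k+1)$ (as $k\ge n$). Hence the sum is at most $\frac{2\cdot 2^{m+1}}{m+1}(\ln(k/n))^{m+1}$, which is the stated constant $\frac{2^{m+2}}{m+1}$.

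\emph{Lower bound.} For $x\in[i-1,i]$ we have $\ln(x/n)\le\ln(i/n)$, which is the favourable direction. For the factor $1/i$, use $i\le 2x$ when $x\ge i-1\ge n\ge1$ (indeed $i\le 2(i-1)$ for $i\ge2$, and $i\ge n+1\ge 2$). This gives
$$g(i)\ge \frac12\int_{i-1}^{i}\frac{(\ln(x/n))^m}{x}\,dx,$$
and summing over $i=n+1,\dots,k$ telescopes the intervals to $[n,k]$, yielding $\frac{1}{2(m+1)}(\ln(k/n))^{m+1}$. When $m=0$ the factor $(\ln(x/n))^0=1$, so nothing changes.

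The only delicate point is the upper bound's passage from $\ln((k+1)/n)$ back to $\ln(k/n)$, handled by the inequality $n(k+1)\le k^2$ above. Everything else is monotone comparison of the summand with the integral of the exact antiderivative $G$.
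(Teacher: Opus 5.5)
Your proposal is correct and is essentially the paper's own proof. The lower bound compares with $\int_{i-1}^{i}\frac{(\ln(t/n))^m}{t}\,dt$ using $i\le 2(i-1)$, and the upper bound compares with the integral over $[i,i+1]$ using $i+1\le 2i$; in both cases the resulting integral is evaluated exactly. The upper bound is then finished, as in the paper, by $n(k+1)\le k(n+1)\le k^2$, which gives $\ln\frac{k+1}{n}\le 2\ln\frac{k}{n}$.
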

\begin{proof} 
Left estimate:
  \begin{align*} \sum_{i=n+1}^{k} \frac{ (\ln \left(\frac{i}{n}\right))^{m}}{i} &\ge  \frac{1}{2}\sum_{i=n+1}^{k} \frac{ (\ln\left(\frac{i}{n}\right))^{m}}{i-1} \int_{i-1}^{i} dt\\
  & \ge \frac{1}{2} \sum_{i=n+1}^{k} \int_{i-1}^i \frac{ (\ln \left(\frac{t}{n}\right))^{m}}{t} dt\\
  &= \frac{1}{2} \int_{n}^k \frac{ (\ln \left(\frac{t}{n}\right))^{m}}{t} dt 
   = \frac{1}{2(m+1)} \left(\ln\left(\frac{k}{n}\right)\right)^{m+1}.
   \end{align*}
   \noindent Right estimate:\\
    As $n+1\le k $, we have that $n(k+1)\le k(n+1)\le k^2$. 
 Therefore,
\[ \left(\ln \left(\frac{k+1}{n}\right)\right)^{m+1}\le
2^{m+1}\left(\ln \left(\frac{k}{n}\right)\right)^{m+1}.
\]
 
    \begin{align*} \sum_{i=n+1}^{k} \frac{ (\ln  \left(\frac{i}{n}\right) )^{m}}{i} &=  \sum_{i=n+1}^{k} \frac{ \left(\ln \left(\frac{i}{n}\right)\right)^{m}}{i} \int_{i}^{i+1} dt\\
  & \le  2 \sum_{i=n+1}^{k} \int_{i}^{i+1} \frac{ (\ln  \left(\frac{t}{n}\right))^{m}}{t} dt\\
  &\le 2 \int_{n}^{k+1} \frac{ (\ln  \left(\frac{t}{n}\right))^{m}}{t} dt 
   = \frac{2}{(m+1)} \left(\ln\left(\frac{k+1}{n}\right)\right)^{m+1}\\
   & \le  \frac{2^{m+2}}{(m+1)} \left(\ln\left(\frac{k}{n}\right)\right)^{m+1}.
   \end{align*}

\end{proof}

\vspace{0.1cm}

\noindent{\bf Lemma G. \cite{gerd} (Partial Sums Lemma)}\label{lem2}
		\emph{Let  $\{f(k)\}_{k\in \mathbb{Z}^+}$ and $\{g(k)\}_{k\in \mathbb{Z}^+}$ be two sequences of non-negative real numbers such that
		\[
		\sum_{k=1}^{n}f(k) \le \sum_{k=1}^{n}g(k)
		\]
		for all $n\in \mathbb{Z}^+,$ then the following estimate holds:
		\[
		\sum_{k=1}^{n}f(k)\varphi(k) \le \sum_{k=1}^{n}g(k)\varphi(k)
		\]
		for all $n\in \mathbb{Z}^+$ and each non-negative, non-increasing sequence $\{\varphi(k)\}_{k\in \mathbb{Z}^+}.$}
        
\vspace{2 pt}

\noindent{\bf Lemma H. \cite{smk}} ({\bf Fubini Theorem})\label{lem4}
\emph{Assume that  $\{f(k)\}_{k\in \mathbb{Z}^+}$ and  $\{g(k)\}_{k\in \mathbb{Z}^+}$ are two sequences of non-negative real numbers. Then }
\begin{equation*}\label{eq04}
	\sum_{k=1}^{n}f(k) \sum_{j=k}^{n}g(j)=\sum_{j=1}^{n}g(j) \sum_{k=1}^{j}f(k)
\end{equation*}
\emph{for all} $n\in \mathbb{Z}^+.$
\begin{lemma}\label{lm12}
Let $\beta\ge0$ be given. If a non-negative sequence $\{\psi (n)\}_{n\in \mathbb{Z}^+}$  satisfies the condition
	\begin{equation}\label{eqa91}
		\sum_{k=1}^n \psi(k) \le c\sum_{k=n}^{2n} \psi(k),
	\end{equation}
 then the following  holds 
 \begin{equation}\label{eqa92}
		n^\beta\sum_{k=1}^n \psi(k) \le C \sum_{k=1}^n k^\beta \psi(k).
	\end{equation}  
\end{lemma}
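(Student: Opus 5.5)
The plan is to bound the left-hand side of \eqref{eqa92} by the tail of the sum on the right over the dyadic block $[n,2n]$, where $k^\beta$ is comparable to $n^\beta$. Since \eqref{eqa91} only involves indices in $[n,2n]$, we can afford to look only at terms with $k\ge n$. The obvious attempt is
\[
n^\beta \sum_{k=1}^n \psi(k) \le c\, n^\beta \sum_{k=n}^{2n}\psi(k) \le c \sum_{k=n}^{2n} k^\beta \psi(k),
\]
using $\beta\ge 0$ and $k\ge n$ in the second step. However, the right-hand side of \eqref{eqa92} only runs up to $k=n$, so the indices $n<k\le 2n$ are not available. We therefore have to move the information to a smaller scale.

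The fix is to apply \eqref{eqa91} at the index $m=\lfloor n/2\rfloor$ (for $n\ge 2$), or more generally to use a dyadic decomposition. First I would split
\[
\sum_{k=1}^n\psi(k)=\sum_{k=1}^{m-1}\psi(k)+\sum_{k=m}^{n}\psi(k).
\]
The second piece is harmless: for $m\le k\le n$ we have $k\ge n/3$, so
\[
n^\beta\sum_{k=m}^n\psi(k)\le 3^\beta\sum_{k=m}^n k^\beta\psi(k)\le 3^\beta\sum_{k=1}^n k^\beta\psi(k).
\]
For the first piece, \eqref{eqa91} at index $m$ gives
\[
\sum_{k=1}^{m}\psi(k)\le c\sum_{k=m}^{2m}\psi(k)\le c\sum_{k=m}^{n}\psi(k),
\]
since $2m\le n$. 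Multiplying by $n^\beta$ and arguing as above then gives the bound $c\,3^\beta\sum_{k=1}^n k^\beta\psi(k)$.

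Combining the two pieces yields \eqref{eqa92} with $C=(1+c)3^\beta$. The case $n=1$ is trivial, with equality and constant $1$.

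The only delicate point is the bookkeeping of indices when $n$ is odd or small. We need $2\lfloor n/2\rfloor\le n$ and $k\ge \lfloor n/2\rfloor\ge n/3$ for $n\ge2$ (the worst case is $n=3$, where $m=1\ge 1$). I would check these elementary inequalities explicitly. Beyond that, no earlier lemma (Power Rules, Partial Sums) is needed, although one could alternatively phrase the argument through Lemma G, viewing $k^\beta$ as increasing, which is the reverse of what Lemma G supplies. So the direct splitting is the route I would commit to.
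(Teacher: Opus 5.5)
Your proof is correct and is essentially the paper's argument. You split at $[n/2]$, apply \eqref{eqa91} at that index using $2[n/2]\le n$, and absorb $n^\beta$ into $k^\beta$ for $k\ge [n/2]$. The differences are cosmetic. You use $[n/2]\ge n/3$ for all $n\ge 2$, giving $C=(1+c)3^\beta$. The paper uses $[n/2]\ge n/4$ for $n\ge 4$, giving $C=4^\beta c+2^\beta$, and treats $n\le 3$ separately as trivial.
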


\begin{proof}
Suppose \eqref{eqa91} holds. For  $n=1,2,3$, it is trivial. For  $n\ge 4,$ using the estimate, 
$$\frac{n}{4} \le \left[\frac{n}{2}\right]\le \frac{n}{2}\le\left[\frac{n}{2}\right]+1,$$
we get
\begin{align*}
		n^\beta\sum_{k=1}^n \psi(k) &= n^\beta\sum_{k=1}^{\left[\frac{n}{2}\right]} \psi(k) +n^\beta\sum_{k=\left[\frac{n}{2}\right]+1}^n \psi(k)\\
        &\le  c n^\beta\sum_{k={\left[\frac{n}{2}\right]}}^{2\left[\frac{n}{2}\right]} \psi(k) +2^\beta \sum_{k=\left[\frac{n}{2}\right]+1}^n k^\beta\psi(k)\\
         &\le  c n^\beta\sum_{k={\left[\frac{n}{2}\right]}}^{n} \psi(k) +2^\beta \sum_{k=\left[\frac{n}{2}\right]+1}^n k^\beta\psi(k)\\  
          &\le  c 4^\beta\sum_{k={\left[\frac{n}{2}\right]}}^{n} k^\beta\psi(k) +2^\beta \sum_{k=1}^n k^\beta\psi(k)\\ 
          &\le (4^\beta c+2^\beta) \sum_{k=1}^n k^\beta \psi(k)=  C \sum_{k=1}^n k^\beta \psi(k),
\end{align*}
where $C=(4^\beta c+2^\beta)$.\\
\end{proof}

\begin{lemma}
Let $\beta\ge0$ be given. If a non-negative sequence $\{\psi (n)\}_{n\in \mathbb{Z}^+}$  satisfies the condition \eqref{eqa92} then there exists $m \in \mathbb Z^+$  such that 
  \begin{equation}\label{eqa94}
		\sum_{k=1}^n \psi(k) \le c\sum_{k=n}^{mn} \psi(k).
	\end{equation}
 
\end{lemma}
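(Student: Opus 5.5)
The idea is to apply hypothesis \eqref{eqa92} at the dilated index $mn$ rather than at $n$. Splitting the resulting sum at $n$ then leaves two pieces. The piece over $k<n$ is controlled by $n^\beta\sum_{k=1}^n\psi(k)$, and the piece over $n\le k\le mn$ is essentially the right-hand side of \eqref{eqa94}. The gain comes from the factor $m^\beta$ on the left, which beats the constant $C$ once $m$ is large. This requires $\beta>0$. For $\beta=0$, \eqref{eqa92} holds trivially with $C=1$ while \eqref{eqa94} fails, for instance for finitely supported $\psi$. So I would treat $\beta>0$, and note that $\beta=0$ must be excluded.

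Write $\Psi(n)=\sum_{k=1}^n\psi(k)$ and $S_m(n)=\sum_{k=n}^{mn}\psi(k)$, and fix $n\in\mathbb Z^+$ and $m\in\mathbb Z^+$, $m\ge 2$. Applying \eqref{eqa92} with $mn$ in place of $n$ gives
\[
(mn)^\beta\Psi(mn)\le C\sum_{k=1}^{n-1}k^\beta\psi(k)+C\sum_{k=n}^{mn}k^\beta\psi(k)\le C\,n^\beta\Psi(n)+C\,(mn)^\beta S_m(n),
\]
where we used $k^\beta\le n^\beta$ for $k<n$ and $k^\beta\le (mn)^\beta$ for $k\le mn$ (valid since $\beta\ge0$). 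Since $\psi\ge0$, we have $\Psi(mn)\ge\Psi(n)$. Dividing by $n^\beta$ then yields
\[
m^\beta\Psi(n)\le C\,\Psi(n)+C\,m^\beta S_m(n).
\]

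Now choose $m\in\mathbb Z^+$ independent of $n$ with $m^\beta\ge 2C$, which is possible precisely because $\beta>0$. Then $m^\beta-C\ge m^\beta/2$, and so
\[
\Psi(n)\le \frac{C\,m^\beta}{m^\beta-C}\,S_m(n)\le 2C\,S_m(n).
\]
This is \eqref{eqa94} with $c=2C$ and $m=\lceil (2C)^{1/\beta}\rceil$, since the finite quantity $\Psi(n)$ can be absorbed without any convergence issue.

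The only real obstacle is choosing the right dilation so that the $m^\beta$ factor absorbs the $C\Psi(n)$ term; the rest is routine bookkeeping. The possible degeneracy at $\beta=0$ should be flagged explicitly.
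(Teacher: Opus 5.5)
Your argument is correct and is essentially the paper's proof: apply \eqref{eqa92} at the index $mn$, split the right-hand sum at $n$, bound $k^\beta$ by $n^\beta$ and by $(mn)^\beta$ on the two pieces, and absorb the $\Psi(n)$ term by taking $m^\beta>C$. The only difference is the constant. The paper keeps $\Psi(mn)=\Psi(n)+\sum_{k=n+1}^{mn}\psi(k)$ on the left and gets $c=m^\beta(C-1)/(m^\beta-C)$, whereas you use $\Psi(mn)\ge\Psi(n)$ and get $c=2C$.

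Your remark that $\beta>0$ is needed is also right. Since \eqref{eqa92} forces $C\ge1$ unless $\psi\equiv0$, the paper's choice of $m$ with $C<m^\beta$ is impossible when $\beta=0$. Your finitely supported example shows the statement genuinely fails there.
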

\begin{proof}
Suppose $\{\psi (n)\}_{n\in \mathbb{Z}^+}$  satisfies the condition \eqref{eqa92}. Let $m \in \mathbb Z^+$ such that $C<m^{\beta}.$ Considering \eqref{eqa92} for an integer $mn,$ we have
\begin{align*}
		 (mn)^{\beta}\sum_{k=1}^{mn} \psi(k) &\le C \sum_{k=1}^{mn}k^\beta \psi(k)\\
        &=  C\lr\sum_{k=1}^{n} k^\beta\psi(k) +  \sum_{k=n+1}^{mn} k^\beta\psi(k)\rr\\
         &\le  C \lr n^{\beta}\sum_{k=1}^{n} \psi(k) +(mn)^{\beta} \sum_{k=n+1}^{mn} \psi(k) \rr.
        \end{align*}
        Therefore, we have

 \begin{equation}\label{eqa95}
   m^{\beta}\sum_{k=1}^{mn} \psi(k)  \le   C \lr\sum_{k=1}^{n} \psi(k) +m^{\beta} \sum_{k=n+1}^{mn} \psi(k) \rr.
 \end{equation}
The inequality \eqref{eqa95} can be written as
     \begin{align*}
		\sum_{k=1}^{n} \psi(k) &\le m^{\beta}\lr \frac{C-1}{m^{\beta}-C}\rr\sum_{k=n}^{mn} \psi(k).
        \end{align*}  
        Thus, we obtain \eqref{eqa94} for $c=  m^{\beta}\lr \frac{C-1}{m^{\beta}-C}\rr.$
   \end{proof}
\begin{remark}
Towards the converse of  Lemma \ref{lm12}, if \eqref{eqa92} holds, then we cannot assert that   \eqref{eqa91} holds. However, we can guarantee the existence of a positive integer $m$ such that \eqref{eqa94} holds. 
   \end{remark}
In  \cite{rma}, authors have proved different necessary and sufficient conditions for the boundedness of the generalized Hardy averaging operator  on subclass of the quasi non-increasing sequences, but with  stronger assumptions. Here we give a characterization for the boundedness of the generalized Hardy averaging operator acting on quasi non-increasing sequences. Following is the main result of this section.
\begin{theorem}\label{th1}
	Let  $0 < p<\infty, ~\beta\ge0$ and $\{\psi (n)\}_{n\in \mathbb{Z}^+}$ be a given non-negative sequence such that \eqref{eqa91} holds.
	Then the inequality 
	\begin{equation}\label{eqa2}
		\sum_{n=1}^\infty(\mathcal{A}_\psi y)^p(n)v(n) \le C\sum_{n=1}^\infty y^p(n)v(n)
	\end{equation}
	holds for all non-negative sequences $\{y(n)\}_{n\in \mathbb{Z}^+}\in \mathcal{Q}_\beta$ if and only if the weight sequence $\{v(n)\}_{n\in \mathbb{Z}^+}$ satisfies
	\begin{equation}\label{eqa3}
		\left(\sum_{k=1}^n k^\beta \psi(k)\right)^p \sum_{k=n}^\infty \Psi^{-p}(k)v(k) \le C \sum_{k=1}^n k^{\beta p}v(k).
	\end{equation}
\end{theorem}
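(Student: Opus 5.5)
The plan is to prove necessity with a single family of test sequences, and sufficiency by splitting $\mathcal{A}_\psi y$ into a local part and a Hardy-type tail. I write $\Phi(m):=\sum_{\tau=1}^m \tau^\beta\psi(\tau)$ throughout.

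\emph{Necessity.} Fix $n$ and test \eqref{eqa2} with $y(k)=k^\beta\chi_{\{1,\dots,n\}}(k)$. This sequence is in $\mathcal{Q}_\beta$, since $k^{-\beta}y(k)=\chi_{\{1,\dots,n\}}(k)$ is non-increasing. For $k\ge n$ we have $(\mathcal{A}_\psi y)(k)=\Psi^{-1}(k)\Phi(n)$. Keeping only the terms $k\ge n$ on the left of \eqref{eqa2} gives
$$\Phi(n)^p\sum_{k=n}^\infty\Psi^{-p}(k)v(k)\le C\sum_{k=1}^n k^{\beta p}v(k),$$
which is \eqref{eqa3}.

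\emph{Sufficiency: decomposition.} Write $y(k)=k^\beta h(k)$ with $h$ non-increasing. By monotone convergence and truncation we may assume $h(k)=\sum_{m\ge k}a_m$ with $a_m\ge 0$ (a constant part of $h$ is handled by the same computation). Interchanging the sums via Lemma H gives
$$(\mathcal{A}_\psi y)(n)=\Psi^{-1}(n)\Big(h(n)\Phi(n)+\sum_{m<n}a_m\Phi(m)\Big).$$
By \eqref{eqa91} and Lemma \ref{lm12} (in the direction $\Phi(n)\le n^\beta\Psi(n)$, which is trivial), the first term is at most $n^\beta h(n)=y(n)$. That part of \eqref{eqa2} is therefore immediate. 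Lemma \ref{lm12} also supplies the reverse comparability $n^\beta\Psi(n)\le C\Phi(n)$, so $\Phi\approx n^\beta\Psi$; I expect to use this in the hard case.

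\emph{Sufficiency: the tail, case $0<p\le1$.} It remains to bound the tail $T(n):=\Psi^{-1}(n)\sum_{m<n}a_m\Phi(m)$. Use $(\sum_m c_m)^p\le\sum_m c_m^p$, then Lemma H, then \eqref{eqa3} at level $m$:
$$\sum_n T(n)^pv(n)\le\sum_m a_m^p\Phi(m)^p\sum_{n>m}\Psi^{-p}(n)v(n)\le C\sum_m a_m^p\sum_{k\le m}k^{\beta p}v(k).$$
Swapping again, the right side is $C\sum_k k^{\beta p}v(k)\sum_{m\ge k}a_m^p\le C\sum_k k^{\beta p}v(k)h(k)^p=C\sum_k y^p(k)v(k)$.

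\emph{Sufficiency: the tail, case $p>1$ (main obstacle).} Termwise splitting is unavailable here. A naive use of Lemma E produces $\Phi(n)^{p-1}\Phi(m)$ in place of $\Phi(m)^p$, which \eqref{eqa3} does not control. My first attempt is a dyadic/level-set version of the Ariño--Muckenhoupt argument behind Theorem A. Group the $m$'s into blocks where $\Phi$ roughly doubles, and bound $\sum_{m<n}a_m\Phi(m)\lesssim\sum_j \Phi(m_j)\,(h(m_{j})-h(m_{j+1}))$ along this sparse sequence. On a sparse (geometric) sequence, H\"older gives $(\sum_j c_j)^p\lesssim\sum_j c_j^p$ with constants depending only on $p$ and the doubling ratio. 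This reduces matters to the $p\le1$ computation above, with $h(m_j)^p$ in place of $a_m^p$. If the doubling of $\Phi$ fails because $\psi$ has gaps, I would instead use \eqref{eqa3} itself to derive a reverse-doubling of $\sum_{k\le n}k^{\beta p}v(k)$, and run the sparse decomposition with respect to that measure.
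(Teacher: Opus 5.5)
Your necessity argument is correct. So are the layer-cake splitting, the bound $\Psi^{-1}(n)h(n)\Phi(n)\le y(n)$ for the local term, and the tail estimate at $p=1$. The tail estimate breaks down for $0<p<1$, however. After subadditivity and \eqref{eqa3} you arrive at $C\sum_k k^{\beta p}v(k)\sum_{m\ge k}a_m^p$, and you then use $\sum_{m\ge k}a_m^p\le h(k)^p=\bigl(\sum_{m\ge k}a_m\bigr)^p$. For $p<1$ this inequality goes the other way, and the loss is not a harmless constant: for $a_m=m^{-2}$ and $p=\tfrac12$ one has $h(k)\approx 1/k$ but $\sum_{m\ge k}a_m^{1/2}=\infty$. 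Applying $(\sum c_m)^p\le\sum c_m^p$ to the raw layer-cake coefficients throws away too much. One repair is to group along the levels where $h$ halves, so that geometric decay of $h$ gives $\sum_{j\colon 2^{-j}<2h(k)}2^{-jp}\lesssim h(k)^p$. The paper instead uses the pointwise bound $y(k)\le C'\Psi(k)^{-1}\sum_{j\le k}y(j)\psi(j)$, with $C'$ the constant in \eqref{eqa92}; this follows from \eqref{eqa92} and the monotonicity of $h$. Inside Power Rule I it replaces $(\sum_{j\le k}y\psi)^{p-1}$ by $\lesssim(y(k)\Psi(k))^{p-1}$, and the proof finishes with Fubini, \eqref{eqa3} and the Partial Sums Lemma applied to the non-increasing sequence $h^p$.

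For $p>1$ the proposed reduction is false. Geometric growth of $\Phi(m_j)$ does not make $c_j=\Phi(m_j)\bigl(h(m_j)-h(m_{j+1})\bigr)$ geometric. Take $\beta=0$, $\psi\equiv1$, $m_j=2^j$ and $h(k)=1/k$: every $c_j$ equals $\tfrac12$, so $(\sum_{j<J}c_j)^p\approx J^p$ while $\sum_{j<J}c_j^p\approx J$. H\"older along a geometric sequence can only trade a power of $\Phi(m_J)$ against powers of $\Phi(m_j)$. That leads to a condition of type \eqref{eqa3} with $p$ replaced by a strictly smaller exponent, which is an openness property you do not have for general $\psi$; it never yields \eqref{eqa3} itself. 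The reverse-doubling fallback is not developed into an argument, so sufficiency for $p>1$ is unproved.

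The missing idea is that no decomposition of $h$ is needed. Power Rule I and Fubini bound the left side of \eqref{eqa2} by $p\sum_k F(k)\,\Phi(k)^{p-1}k^\beta\psi(k)\sum_{m\ge k}\Psi^{-p}(m)v(m)$. Here $F(k)=A(k)^{p-1}h(k)$, and $A(k)=\Phi(k)^{-1}\sum_{j\le k}y(j)\psi(j)$ is a $k^\beta\psi$-weighted average of the non-increasing $h$, so $F$ is non-increasing. The Partial Sums Lemma then reduces everything to
\[
\sum_{k\le n}\Phi(k)^{p-1}k^\beta\psi(k)\sum_{m\ge k}\Psi^{-p}(m)v(m)\le (C+1)\sum_{k\le n}k^{\beta p}v(k).
\]
This follows from Fubini, Power Rule I (which gives $\sum_{k\le N}\Phi(k)^{p-1}k^\beta\psi(k)\le\Phi(N)^p$), the trivial bound $\Phi(m)\le m^\beta\Psi(m)$, and \eqref{eqa3}. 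Next, \eqref{eqa92} gives $F(k)k^{\beta p}\lesssim(\mathcal A_\psi y)^{p-1}(k)\,y(k)$. H\"older's inequality then closes the argument, after truncating $y$ so that the left side is finite.
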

\begin{proof}
Suppose  \eqref{eqa2} holds. For given $ n \in \mathbb Z^+,$ take
\begin{equation*}
    y(k):=
    \begin{cases}
        k^\beta, & k=1,2,3,\cdots, n;\\
        0, & k>n.
    \end{cases}
\end{equation*}
Clearly, $ \{y(k)\}_{ k \in \mathbb Z^+} \in \mathcal Q_\beta.$ Then, on computing  \eqref{eqa2} for $\{y(k)\}_{ k \in \mathbb Z^+},$ we easily get \eqref{eqa3}. 

\vspace{1.2 pt}

\noindent Towards the converse, suppose that the conditions \eqref{eqa91} and \eqref{eqa3} hold. Clearly, by Lemma \ref{lm12}, the estimate \eqref{eqa92} holds.\\
We first consider the case $p>1$. 
   Let $\{y(n)\}_{n \in \mathbb Z^+}\in \mathcal{Q}_\beta$ be any  non-negative sequence.  Then by using the Power Rule I, Fubini Theorem, Partial Sums Lemma, conditions \eqref{eqa3}, \eqref{eqa92} and the  H\"older inequality, we have 
  \begin{align*}
		\sum_{n=1}^\infty(\mathcal{A}_\psi y)^p(n)v(n) &\le p  \sum_{n=1}^\infty\left(\sum_{k=1}^n \left (\sum_{j=1}^ky(j)\psi(j)\right)^{p-1} y(k)\psi(k)\right)\Psi(n)^{-p} v(n)\\
        &=p\sum_{k=1}^\infty \left (\sum_{j=1}^ky(j)\psi(j)\right)^{p-1} y(k)\psi(k)\sum_{n=k}^\infty \Psi(n)^{-p} v(n)\\
        &= p\sum_{k=1}^\infty \left (\frac{1}{\sum_{j=1}^{k} j^{\beta }\psi(j)}\sum_{j=1}^ky(j)j^{-\beta}j^{\beta}\psi(j)\right)^{p-1} k^{-\beta}y(k) \\
        &\hskip+2cm \times \left(\sum_{j=1}^{k}j^{\beta }\psi(j)\right)^{p-1}k^{\beta} \psi(k)\sum_{n=k}^\infty \Psi(n)^{- p} v(n)\\
        &\le C p\sum_{k=1}^\infty \left (\frac{1}{\sum_{j=1}^{k} j^{\beta }\psi(j)}\sum_{j=1}^ky(j)j^{\beta}j^{-\beta}\psi(j)\right)^{p-1} k^{-\beta}y(k) k^{\beta p}v(k)\\
        &=C p\sum_{k=1}^\infty (A_\Psi y)^{p-1}(k)\left (\frac{k^\beta\sum_{j=1}^{k}\psi(j)}{\sum_{j=1}^{k} j^{\beta }\psi(j)}\right)^{p-1} y(k) v(k)\\
         &\le C (4^\beta c+2^\beta)^{p-1} p\sum_{k=1}^\infty (A_\Psi y)^{p-1}(k) y(k) v(k)\\ 
         &\le C_1\left(\sum_{k=1}^\infty (A_\Psi y)^{p}(k)v(k)\right)^{\frac{1}{p'}} \left(\sum_{k=1}^\infty y(k)^p v(k) \right)^{\frac{1}{p}},
        \end{align*}
        where $C_1=C p (4^\beta c+2^\beta)^{p-1}.$\\
Now we consider the case  $0<p\le 1$.  Let  $\{y(n)\}_{n \in \mathbb Z^+}$ be any non-negative sequences in $ \mathcal{Q}_\beta$, By using the condition  \eqref{eqa92},
  the nth term of the sequence $\{y(n)\}_{n \in \mathbb Z^+}$ can be written as
  \begin{align*} y(n)&=n^{-\beta}y(n)\frac{n^\beta\Psi(n)\sum_{k=1}^n k^{\beta}\psi(k)}{\Psi(n)\sum_{k=1}^n k^{\beta}\psi(k)}\\
  &\le (4^\beta c+2^\beta)\frac{1}{\Psi(n)} \sum_{k=1}^n k^{-\beta}y(k)k^{\beta}\psi(k)\\
 &=  \frac{(4^\beta c+2^\beta)}{\Psi(n)} \sum_{k=1}^n y(k)\psi(k).
  \end{align*}
  Therefore,    
\begin{equation}\label{eqa96}
    \left(\sum_{k=1}^n y(k)\psi(k)\right)^{p-1} \le \lr\frac{1}{4^\beta c+2^\beta}\rr^{p-1} y(n)^{p-1}\Psi(n)^{p-1}.
\end{equation}
Now,  by considering the LHS of \eqref{eqa2}, on using  Power Rule I, \eqref{eqa96} and   Fubini Theorem  we get  
\begin{align}\label{eqa97}
		\nonumber\sum_{n=1}^\infty(\mathcal{A}_\psi y)^p(n)v(n) &\le   \sum_{n=1}^\infty\left(\sum_{k=1}^n \left (\sum_{j=1}^ky(j)\psi(j)\right)^{p-1} y(k)\psi(k)\right)\Psi(n)^{-p} v(n)\\\nonumber
        &\le \lr\frac{1}{4^\beta c+2^\beta}\rr^{p-1}  \sum_{n=1}^\infty\left(\sum_{k=1}^n y(k)^p\Psi(k)^{p-1} \psi(k)\right)\Psi(n)^{-p} v(n)\\\nonumber
        &=\lr\frac{1}{4^\beta c+2^\beta}\rr^{p-1}   \sum_{k=1}^\infty y(k)^p\Psi(k)^{p-1} \psi(k) \sum_{n=k}^\infty\Psi(n)^{-p} v(n)\\\nonumber
               &\le  \lr\frac{1}{4^\beta c+2^\beta}\rr^{p-1}   \sum_{k=1}^\infty k^{-\beta p} y(k)^p\left( \sum_{j=1}^k j^\beta\psi(j)\right)^{p-1}  \\
        &\hskip+2cm \times k^\beta\psi(k) \sum_{n=k}^\infty\Psi(n)^{-p} v(n).  
\end{align}
Since $\{k^{-\beta}y(k)\}$ is a non-increasing sequence, by using the Partial Sums Lemma  and condition \eqref{eqa3} in \eqref{eqa97}, we obtain 
     \begin{align*}
		\sum_{n=1}^\infty(\mathcal{A}_\psi y)^p(n)v(n) 
               &\le C\lr \frac{1}{4^\beta c+2^\beta}\rr^{p-1} p  \sum_{k=1}^\infty k^{-\beta p} y(k)^p k^{\beta p} v(k)\\
               &= C_2   \sum_{k=1}^\infty  y(k)^p  v(k),  
\end{align*}
where $C_2= C \lr 4^\beta c+2^\beta\rr^{1-p}.$ Hence the assertion is proved.
\end{proof}
\begin{remark}\label{r3}
Observe that the assumption \eqref{eqa91} in Lemma \ref{lm12} can be replaced by \eqref{eqa94}. By following similar steps, it can be proved that it affects only the constants appearing in the proof.  A similar change can be made in the statement of Theorem \ref{th1}.  
\end{remark}
\begin{remark}
  Note that for the case $ \beta=0,$ the condition \eqref{eqa3} reduces to the $\mathcal B_{\psi,p}$ condition, see \eqref{s5} below,  and the class of quasi non-increasing sequences reduces to the class of non-increasing sequences. In \cite{sa} and \cite{smk}, authors have characterized the boundedness of the discrete generalised Hardy averaging operator for the class of non-increasing sequences. Theorem \ref{th1} generalizes the results proved in \cite{sa, smk}.   
\end{remark}

\noindent Precisely, following is a particular case of Theorem \ref{th1}:
\begin{corollary}\label{s9}
	Let  $0 < p<\infty$ and $\{\psi (n)\}_{n\in \mathbb{Z}^+}$ be a given non-negative sequence. 
	Then the inequality 
	\begin{equation*}
		\sum_{n=1}^\infty(\mathcal{A}_\psi y)^p(n)v(n) \le C\sum_{n=1}^\infty y^p(n)v(n)
	\end{equation*}
	holds for all non-negative non-increasing sequences $\{y(n)\}_{n\in \mathbb{Z}^+}$ if and only if the weight sequence $\{v(n)\}_{n\in \mathbb{Z}^+}$ satisfies
	\begin{equation*}\label{s5}
		\sum_{k=n}^\infty\lr\frac{\Psi(n)} {\Psi(k)}\rr^{p}v(k) \le C \sum_{k=1}^n v(k),   ~~\text{ for all} ~~n \in \mathbb Z^+.
	\end{equation*}
\end{corollary}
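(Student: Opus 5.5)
The plan is to obtain the corollary by specializing Theorem \ref{th1} to $\beta=0$. The only subtlety is that the corollary drops the hypothesis \eqref{eqa91} on $\{\psi(n)\}$. So I will not apply Theorem \ref{th1} as a black box; instead I will rerun its proof with $\beta=0$ and check that \eqref{eqa91} is never needed in that case.

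First I record the identifications at $\beta=0$. The class $\mathcal{Q}_0$ is exactly the class of non-negative non-increasing sequences, since $k^{-0}y(k)=y(k)$. Condition \eqref{eqa3} reads
\[
\Psi(n)^p\sum_{k=n}^\infty \Psi^{-p}(k)v(k)\le C\sum_{k=1}^n v(k),
\]
because $\sum_{k=1}^n k^0\psi(k)=\Psi(n)$ and $k^{0\cdot p}=1$. Moving $\Psi(n)^p$ inside the sum gives precisely the condition \eqref{s5} in the statement.

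Next I locate where \eqref{eqa91} enters the proof of Theorem \ref{th1}. It is used only through Lemma \ref{lm12}, to obtain \eqref{eqa92}, and then only via the resulting constant $(4^\beta c+2^\beta)$. For $\beta=0$, inequality \eqref{eqa92} says $\sum_{k=1}^n\psi(k)\le C\sum_{k=1}^n\psi(k)$, which holds trivially with $C=1$ for every non-negative $\psi$. In both the case $p>1$ and the case $0<p\le 1$, the factor $(4^\beta c+2^\beta)^{\pm(p-1)}$ is just the constant from \eqref{eqa92}. I will therefore replace it by $1$ throughout, making the ratio $\frac{k^\beta\Psi(k)}{\sum_{j\le k}j^\beta\psi(j)}$ identically $1$.

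The remaining steps carry over verbatim. Necessity uses the test sequence $y=\chi_{\{1,\dots,n\}}$, which is non-increasing. Sufficiency for $p>1$ uses Power Rule I, the Fubini Theorem, condition \eqref{s5} and H\"older's inequality, and then divides by the finite quantity $\big(\sum(\mathcal A_\psi y)^pv\big)^{1/p'}$. One should first truncate $y$ to finitely many nonzero terms, or otherwise justify finiteness, before dividing. Sufficiency for $0<p\le1$ uses the bound $y(n)\Psi(n)\le\sum_{k\le n}y(k)\psi(k)$, which is immediate from monotonicity, together with the Partial Sums Lemma. Apart from this finiteness point, the only step requiring care is the check that no hidden use of \eqref{eqa91} remains, and this reduces to confirming that \eqref{eqa92} is trivial at $\beta=0$.
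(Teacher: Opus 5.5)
Your proposal is correct and follows the paper's route: the paper presents Corollary \ref{s9} simply as the case $\beta=0$ of Theorem \ref{th1}, and your observation that \eqref{eqa92} holds trivially with constant $1$ when $\beta=0$ is exactly what justifies dropping hypothesis \eqref{eqa91}, which the paper does without comment. Your truncation before dividing in the H\"older step is a useful addition; note also that for $p>1$ you need the Partial Sums Lemma, applied with the non-increasing factor $(\mathcal{A}_\psi y)^{p-1}(k)\,y(k)$, as in the paper, because the weight condition of the corollary only controls partial sums and gives no pointwise bound.
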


\begin{remark}
In Corollary \ref{s9}, for $\beta \ge0$, by taking $\{\psi(k)=k^\beta\}_{ k \in \mathbb Z^+}$ $\{v(n)\}_{ n \in \mathbb Z^+}=\{v(n) n^{\beta p}\}_{ n \in \mathbb Z^+}$ and $\{y(n)\}_{ n \in \mathbb Z^+}=\{y_n n^{-\beta}\}_{ k \in \mathbb Z^+},$ on using  approximation $\Psi(n) = \displaystyle \sum_{k=1}^{n} k^\beta \approx n^{\beta +1},$ we get a characterization of the boundedness of the discrete Hardy operator (given in  \eqref{s3}), on the class of quasi non-increasing sequences.
\end{remark}

\begin{corollary}\label{s8}
Let  $0 < p<\infty.$ 
	Then the inequality 
	\begin{equation*}
		\sum_{n=1}^\infty(\mathcal{A} y)^p(n)v(n) \le C\sum_{n=1}^\infty y^p(n)v(n)
	\end{equation*}
	holds for all  $\{y(n)\}_{n\in \mathbb{Z}^+} \in \mathcal Q_\beta$ if and only if the weight sequence $\{v(n)\}_{n\in \mathbb{Z}^+}$ satisfies
	\begin{equation*}
		\sum_{k=n}^\infty\lr\frac{n} {k}\rr^{p}v(k) \le C \sum_{k=1}^n\lr\frac{k} {n}\rr^{\beta p} v(k)   ~~\text{ for all} ~~n \in \mathbb Z^+.
	\end{equation*}

\end{corollary}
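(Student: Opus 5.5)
The plan is to obtain Corollary \ref{s8} by specializing Theorem \ref{th1} to $\psi(k)\equiv 1$ and then rewriting the weight condition \eqref{eqa3}. With $\psi\equiv 1$ we have $\Psi(k)=k$, so $\mathcal{A}_\psi=\mathcal{A}$, the operator in \eqref{s3}. The hypothesis \eqref{eqa91} holds with $c=1$, because
\[
\sum_{k=1}^n 1 = n \le n+1 = \sum_{k=n}^{2n} 1 .
\]
Theorem \ref{th1} is stated for $\beta\ge 0$, so I take that as the standing range here (the corollary does not restrict $\beta$ further). Theorem \ref{th1} then says that the Hardy inequality for $\mathcal{A}$ holds on $\mathcal{Q}_\beta$ if and only if
\[
\left(\sum_{k=1}^n k^\beta\right)^p \sum_{k=n}^\infty k^{-p} v(k) \le C \sum_{k=1}^n k^{\beta p} v(k) \quad \text{for all } n\in\mathbb{Z}^+ .
\]

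The next step is to replace $\sum_{k=1}^n k^\beta$ by $n^{\beta+1}$, up to constants depending only on $\beta$. Power Rule I (Lemma E) with $q=\beta+1\ge 1$ gives
\[
\frac{n^{\beta+1}}{\beta+1} \le \sum_{k=1}^n k^\beta \le n^{\beta+1}.
\]
Substituting this, the condition becomes equivalent (with a changed constant) to
\[
n^{(\beta+1)p}\sum_{k=n}^\infty k^{-p}v(k) \le C\sum_{k=1}^n k^{\beta p}v(k).
\]
Dividing both sides by $n^{\beta p}$ gives
\[
\sum_{k=n}^\infty \left(\frac{n}{k}\right)^p v(k) \le C\sum_{k=1}^n \left(\frac{k}{n}\right)^{\beta p} v(k),
\]
which is exactly the condition in the statement. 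Each step is a two-sided equivalence with constants depending only on $\beta$ and $p$, so both directions of the corollary follow.

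There is no real obstacle in this argument. The only point needing care is that the comparison between $\sum_{k=1}^n k^\beta$ and $n^{\beta+1}$ must be two-sided, so that both the necessity and the sufficiency parts transfer. That is why Lemma E is used for both bounds rather than a one-sided estimate.
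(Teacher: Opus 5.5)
Your proposal is correct and takes the same route the paper indicates: specialize Theorem \ref{th1} to $\psi\equiv 1$, for which \eqref{eqa91} holds trivially and $\Psi(k)=k$, then use the two-sided comparison $\frac{n^{\beta+1}}{\beta+1}\le\sum_{k=1}^n k^\beta\le n^{\beta+1}$ from Power Rule I and divide by $n^{\beta p}$. The paper's preceding remark also sketches an equivalent derivation through Corollary \ref{s9}, taking $\psi(k)=k^\beta$ and substituting $v(n)\mapsto n^{\beta p}v(n)$, $y(n)\mapsto n^{-\beta}y(n)$.
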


\noindent The Corollary \ref{s8} can also be obtained from Theorem \ref{th1} on taking  in particular $\{\psi (n)=1\}$ for all $n \in \mathbb Z^+.$

\section{Extrapolation Theorem on $\mathcal Q_\beta$}
In this section, we shall prove the Rubio de Francia extrapolation theorem for the class of quasi non-increasing sequences. Before giving the main theorem, we state some lemmas that are needed in the proof.

\noindent{\bf Lemma I. \cite{hlp}}  \emph{If $x$ and $y$ are positive and unequal, then for $r<0$ or $r>1$,} the following holds:
\[
ry^{r-1}(x-y)<x^r-y^r<rx^{r-1}(x-y).
\]
 \noindent \emph{The above inequality reverses when} $0<r< 1.$\\
 
For $r\in \mathbb{R}$ and $\tau\in \mathbb{Z}^+,$ the forward difference operator is given by
\[
\Delta \tau^r = (\tau+1)^r - \tau^r.
\]
From the definition of the forward difference operator, we get
 
	\[
\sum_{\tau=1}^k \Delta\tau^r = (k+1)^r-1
\]
for all $r\in \mathbb{R}$ and $\tau \in \mathbb{Z}^+,$
and
\[
\sum_{\tau= k+1}^\infty \Delta\tau^{r} = -(k+1)^{r}
\]
for all $r\in \mathbb{R}, \quad r<0$ and $\tau \in \mathbb{Z}^+.$

\begin{lemma}\label{lema}
	\begin{enumerate}
    \item\label{it1}  For $\varepsilon >0,$ we have 
		$$\displaystyle \sum_{\tau= k+1}^\infty \left( \frac{1}{\tau}\right)^{\varepsilon+1}\le \frac{1}{\varepsilon k^\varepsilon}.$$
	\item\label{it2} For $0<\varepsilon<1,$ we have 
    $$(k+1)^\varepsilon \le \varepsilon\displaystyle \sum_{\tau=1}^k \tau^{\varepsilon-1} +1, ~k\in \mathbb{Z}^+.$$
	\end{enumerate}
\end{lemma}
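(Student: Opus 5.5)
For part \ref{it1} I will compare the sum with an integral, using that $t\mapsto t^{-(\varepsilon+1)}$ is decreasing on $(0,\infty)$. For each $\tau\ge k+1$ and every $t\in[\tau-1,\tau]$ we have $\tau^{-(\varepsilon+1)}\le t^{-(\varepsilon+1)}$, so
\[
\left(\frac{1}{\tau}\right)^{\varepsilon+1}\le\int_{\tau-1}^{\tau}t^{-(\varepsilon+1)}\,dt .
\]
Summing over $\tau\ge k+1$ telescopes the intervals into $[k,\infty)$, which gives
\[
\sum_{\tau=k+1}^\infty\left(\frac1\tau\right)^{\varepsilon+1}\le\int_k^\infty t^{-\varepsilon-1}\,dt=\frac{1}{\varepsilon k^{\varepsilon}} .
\]
Alternatively, and more in the spirit of the forward-difference identities just recorded, I can apply Lemma I with $r=-\varepsilon<0$, $x=\tau-1$, $y=\tau$ (for $\tau\ge2$). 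This yields
\[
(\tau-1)^{-\varepsilon}-\tau^{-\varepsilon}> -\varepsilon\tau^{-\varepsilon-1}(\tau-1-\tau)=\varepsilon\tau^{-\varepsilon-1}.
\]
Summing over $\tau\ge k+1$ telescopes to $k^{-\varepsilon}$, because $\tau^{-\varepsilon}\to0$, which is exactly the identity $\sum_{\tau\ge k}\Delta\tau^{-\varepsilon}=-k^{-\varepsilon}$. Either route gives the claim.

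For part \ref{it2} I will again use the telescoping identity $\sum_{\tau=1}^k\Delta\tau^\varepsilon=(k+1)^\varepsilon-1$. It then suffices to show the termwise bound $\Delta\tau^{\varepsilon}\le\varepsilon\tau^{\varepsilon-1}$ for each $\tau\ge1$. Since $0<\varepsilon<1$, Lemma I applies in its reversed form with $r=\varepsilon$, $x=\tau+1$, $y=\tau$, giving
\[
(\tau+1)^\varepsilon-\tau^\varepsilon<\varepsilon\tau^{\varepsilon-1}\big((\tau+1)-\tau\big)=\varepsilon\tau^{\varepsilon-1}.
\]
One can also see this from concavity of $t^\varepsilon$, or from the mean value theorem with $t^{\varepsilon-1}$ decreasing. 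Summing from $\tau=1$ to $k$ gives $(k+1)^\varepsilon-1\le\varepsilon\sum_{\tau=1}^k\tau^{\varepsilon-1}$, which is the assertion.

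Neither step is a real obstacle. The only point needing care is using the correct direction of Lemma I in each regime: $r<0$ in part \ref{it1} and $0<r<1$ in part \ref{it2}. In part \ref{it1} one must also check that the tail of the telescoping sum vanishes, i.e.\ $\tau^{-\varepsilon}\to0$, which holds because $\varepsilon>0$.
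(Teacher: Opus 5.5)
Your proof is correct and takes essentially the paper's approach: in part (1) your Lemma I version with $r=-\varepsilon$ plus telescoping, and in part (2) the reversed Lemma I with $r=\varepsilon$, $x=\tau+1$, $y=\tau$ summed via $\sum_{\tau=1}^k\Delta\tau^\varepsilon=(k+1)^\varepsilon-1$, are exactly what the paper does. Your integral-comparison argument for part (1) is an equally valid, slightly more elementary alternative that yields the same bound $\frac{1}{\varepsilon k^{\varepsilon}}$.
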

\begin{proof}
\begin{enumerate}
   \item  Take $r= -\varepsilon (<0)$ then for $x=\tau, ~y= \tau-1, ~\tau\in \mathbb{Z}^+,$ Lemma I gives

	$$ -\varepsilon(\tau-1)^{-\varepsilon-1}\le \tau^{-\varepsilon} - (\tau-1)^{-\varepsilon} \le -\varepsilon \tau^{-\varepsilon-1}, $$
	i.e.,
	$$  -\varepsilon(\tau-1)^{-(\varepsilon+1)}\le \Delta(\tau-1)^{-\varepsilon} \le -\varepsilon \tau^{-(\varepsilon+1)}. $$
	
	From the RHS estimate of the above inequality, we get
	$$ \displaystyle \sum_{\tau= k+1}^\infty \Delta (\tau-1)^{-\varepsilon} \le -\varepsilon \displaystyle \sum_{\tau= k+1}^\infty \tau^{-(\varepsilon+1)}, ~k\in \mathbb{Z}^+.  $$
	On simplifying it, we have
   $$ \sum_{\tau= k+1}^\infty \left(\frac{1}{\tau}\right)^{\varepsilon+1} \le \frac{1}{\varepsilon k^\varepsilon}. $$
	
    \item  In Lemma I, on taking $r=\varepsilon, ~x=\tau+1, ~y= \tau, ~\tau\in \mathbb{Z}^+,$ we get
	$$ \varepsilon (\tau+1)^{\varepsilon-1} \le (\tau+1)^{\varepsilon}-\tau^\varepsilon \le \varepsilon\tau^{\varepsilon-1},$$
	i.e., 
	$$ \varepsilon (\tau+1)^{\varepsilon-1} \le \Delta\tau^\varepsilon \le \varepsilon\tau^{\varepsilon-1}.$$
	For $k\in \mathbb{Z}^+,$ RHS estimate of the above inequality gives
	$$\sum_{\tau=1}^k \Delta \tau^\varepsilon \le \varepsilon\sum_{\tau=1}^k \tau^{\varepsilon-1},~k\in \mathbb{Z}^+, $$
	i.e., $$ (k+1)^\varepsilon  \le \varepsilon\sum_{\tau=1}^k \tau^{\varepsilon-1}+1.$$

\end{enumerate}
	
 \end{proof}

 \begin{lemma}\label{lm34}
Let $ p>0, ~\beta > -1$ and $\{w(k)\}_{k\in \mathbb{Z}^+} \in \mathcal{QB}_{\beta,p}$ with constant $c$. Then there exists $\varepsilon, \,\, 0<\varepsilon<  \frac{1}{2(c+1) \max\lge \frac{1}{p+\beta p}, 1\rge}$ such that $\{w(k)\}\in \mathcal{QB}_{\beta,p-\varepsilon}.$
\end{lemma}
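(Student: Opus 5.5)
The plan is to compare the $(p-\varepsilon)$-quantities with the $p$-quantities at the same $n$ and absorb the error terms using the $\mathcal{QB}_{\beta,p}$ condition. For fixed $n$ write
\[
T_\varepsilon(n):=\sum_{k=n}^\infty\Big(\frac nk\Big)^{p-\varepsilon}w(k),\qquad W_\varepsilon(n):=\sum_{k=1}^n\Big(\frac kn\Big)^{\beta(p-\varepsilon)}w(k),
\]
so that the hypothesis reads $T_0(n)\le c\,W_0(n)$ for all $n$. The goal is $T_\varepsilon(n)\le C\,W_\varepsilon(n)$ for every $n$, with a constant $C$ of the form $c'/(1-\varepsilon K)$, where $K$ depends only on $c,p,\beta$. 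This is the shape of \eqref{e11}, and it explains the restriction $\varepsilon<\frac{1}{2(c+1)\max\{\frac{1}{p+\beta p},1\}}$.

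\emph{Step 1 (the tail).} For $k\ge n$ write $(n/k)^{p-\varepsilon}=(n/k)^p(k/n)^\varepsilon$. Expand $(k/n)^\varepsilon$ by telescoping $\Delta\tau^\varepsilon$, using the estimates in Lemma~\ref{lema}(\ref{it2}) (which come from Lemma I):
\[
\Big(\frac kn\Big)^\varepsilon\le 1+\varepsilon\, n^{-\varepsilon}\sum_{j=n}^{k}j^{\varepsilon-1}
\]
up to harmless constants. Interchanging the order of summation (Lemma H) gives
\[
T_\varepsilon(n)\le T_0(n)+\varepsilon\sum_{j\ge n}\Big(\frac nj\Big)^{p}\Big(\frac jn\Big)^{\varepsilon}\frac1j\,T_0(j).
\]
Then apply the hypothesis at $j$: $T_0(j)\le c\,W_0(j)$. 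Split $W_0(j)$ into the part with $k\le n$ and the part with $n<k\le j$.
\begin{itemize}
\item The part with $k\le n$ contributes $(n/j)^{\beta p}W_0(n)$. The $j$-sum $\sum_{j\ge n}(n/j)^{p+\beta p-\varepsilon}j^{-1}$ is then of size $\frac{1}{p+\beta p-\varepsilon}$ by Lemma~\ref{lema}(\ref{it1}) (this is where $\beta>-1$ and the factor $\max\{\frac1{p+\beta p},1\}$ enter).
\item The part with $n<k\le j$, after a second application of Fubini, gives back a multiple of $T_\varepsilon(n)$ itself, again with a factor $\varepsilon\,c\,\max\{\frac{1}{p+\beta p},1\}$. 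The logarithmic-sum estimate (the first, unnamed lemma of Section 2) can be used if a $\log$ appears in the limiting computations.
\end{itemize}
Collecting terms should give
\[
T_\varepsilon(n)\le c\,W_0(n)+\varepsilon K\big(W_0(n)+T_\varepsilon(n)\big),\qquad K\approx 2(c+1)\max\Big\{\tfrac1{p+\beta p},1\Big\}.
\]
For $\varepsilon K<1$ the term $T_\varepsilon(n)$ is absorbed; to avoid subtracting a possibly infinite quantity, first truncate the sum at $k\le N$ and let $N\to\infty$.

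\emph{Step 2 (the head).} It remains to pass from $W_0(n)$ to $W_\varepsilon(n)$.
\begin{itemize}
\item If $\beta\ge0$ this is immediate, since $(k/n)^{\beta p}\le(k/n)^{\beta(p-\varepsilon)}$ for $k\le n$.
\item If $-1<\beta<0$ the inequality goes the wrong way: $(k/n)^{\beta p}=(k/n)^{\beta(p-\varepsilon)}(n/k)^{|\beta|\varepsilon}$. Here I would repeat the Step 1 mechanism on the head. Expand $(n/k)^{|\beta|\varepsilon}-1\lesssim\varepsilon|\beta|\sum_{j=k}^{n}j^{-1}(n/j)^{|\beta|\varepsilon}$ and use Fubini to rewrite the error as $\varepsilon\sum_{j\le n}j^{-1}(\cdot)W(j)$-type terms. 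Then bound $W(j)$ back in terms of $W_\varepsilon(n)$, using the decay of $(j/n)^{\beta(p-\varepsilon)}$ and the doubling-type control supplied by the $\mathcal{QB}_{\beta,p}$ condition.
\end{itemize}

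I expect this last sub-step, the case $\beta<0$ for the head, to be the main obstacle: the tail argument uses only the hypothesis, whereas here one must show that the weighted head sums do not concentrate near $k=1$. If the direct absorption fails, the fallback is to use the equivalence $w\in\mathcal{QB}_{\beta,p}\Leftrightarrow\{k^{\beta p}w(k)\}\in\mathcal B_{(\beta+1)p}$. One would prove the open-ended property for $\mathcal B_q$ (the case $\beta=0$ of Step 1) and then transfer it, tracking the extra factor $k^{-\beta\varepsilon}$ as a perturbation of size $O(\varepsilon)$.
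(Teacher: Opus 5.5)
\textbf{The case $-1<\beta<0$ is a genuine gap, and it cannot be closed: the lemma is false there.} Your Step 1 is correct for all $\beta>-1$, and with your Step 2 it is a complete proof for $\beta\ge 0$.

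To see the failure, put $w(k)=k^{-\beta p}u(k)$ and $q=(1+\beta)p$. Then $w\in\mathcal{QB}_{\beta,p}$ iff $u\in\mathcal{B}_q$, while $w\in\mathcal{QB}_{\beta,p-\varepsilon}$ iff
\[
\sum_{k\ge n}\Big(\frac nk\Big)^{q-\varepsilon}u(k)\le C\sum_{k\le n}\Big(\frac kn\Big)^{|\beta|\varepsilon}u(k)\qquad\text{for all } n .
\]
The $\mathcal{B}_q$ condition only bounds the growth of $U(k)=\sum_{i\le k}u(i)$ from above. It gives no reverse doubling, so $U$ can stay constant on arbitrarily long stretches. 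This is exactly the concentration near small $k$ you were worried about.

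Here is an explicit example. Fix $0<r<q$ and integers $s_j$ with $s_{j+1}>2s_j$ and $s_{j+1}/s_j\to\infty$. Set
\begin{itemize}
\item $U(k)=1$ for $k\le s_1$;
\item $U(k)=2^{r(j-1)}(k/s_j)^r$ on $[s_j,2s_j]$;
\item $U(k)=2^{rj}$ on $[2s_j,s_{j+1}]$.
\end{itemize}
Since $U(k)k^{-r}$ is non-increasing, summation by parts gives $\sum_{k\ge n}(n/k)^q u(k)\le q\bigl(1+\frac1{q-r}\bigr)U(n)$, so $u\in\mathcal{B}_q$. At $n=s_j$ the left side above is at least $2^{-q}(2^r-1)U(s_j)$. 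The right side is at most $C(2s_{j-1}/s_j)^{|\beta|\varepsilon}U(s_j)$, because $u=0$ on $(2s_{j-1},s_j]$. Hence $w\notin\mathcal{QB}_{\beta,p-\varepsilon}$ for every $\varepsilon>0$.

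Your fallback fails for the same reason: for $k\le n$ the factor $(n/k)^{|\beta|\varepsilon}$ is not $1+O(\varepsilon)$ uniformly in $n/k$. The paper's proof has the same limitation. Its last step, $\sum_{k\le n}k^{\beta p}w(k)\le n^{\beta\varepsilon}\sum_{k\le n}k^{\beta(p-\varepsilon)}w(k)$, uses $k^{\beta\varepsilon}\le n^{\beta\varepsilon}$, i.e.\ $\beta\ge0$. Only $\beta\ge 0$ is used later in the paper.

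\textbf{For $\beta\ge 0$ your proof is correct and takes a different route from the paper's.} Both arguments first prove the tail bound $T_\varepsilon(n)\lesssim W_0(n)$ and then use the trivial comparison $W_0\le W_\varepsilon$. They differ in how the tail bound is obtained.
\begin{itemize}
\item The paper writes the hypothesis as $Tf\le 2Cf$ for $Tf(n)=n^{p+\beta p}\sum_{k\ge n}k^{-p-\beta p-1}f(k)$, with $f(k)=\sum_{i\le k}i^{\beta p}w(i)$. It iterates this to get kernels $\ln^m(k/n)/m!$ and sums the exponential series to produce the factor $(k/n)^\varepsilon$. One more Fubini step then returns from $f$ to $w$.
\item You apply the hypothesis once, at every $j\ge n$. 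The part of $W_0(j)$ with $n<k\le j$ returns $\varepsilon cK\,T_\varepsilon(n)$, where $K=1+\frac1a$ and $a=p+\beta p-\varepsilon$. Absorbing this term, after truncating at $k\le N$ so it is finite, gives $T_\varepsilon(n)\le\frac{c(1+\varepsilon K)}{1-\varepsilon cK}W_0(n)$.
\end{itemize}
Your route is shorter, avoids handling the iterates $T^m$, and produces an explicit constant of the same shape as \eqref{e11}. The paper's series argument, in exchange, yields finiteness of the tail directly, without truncation.
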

\begin{proof} 
Let $\{w(k)\}_{k\in \mathbb{Z}^+}\in \mathcal{QB}_{\beta,p}.$ By definition of the weight class $\mathcal{QB}_{\beta,p},$
we have the following estimate 
\begin{equation}\label{m1}
\sum_{k=1}^n\left(\frac{k}{n}\right)^{\beta p}w(k) + \sum_{k=n}^\infty \left(\frac{n}{k}\right)^{ p}w(k)\le (c+1)\sum_{k=1}^n\left(\frac{k}{n}\right)^{\beta p}w(k).
\end{equation}
Also, we have the following estimate
\begin{align}\label{m2} \nonumber\sum_{k=i}^\infty\left(\frac{n}{k}\right)^{ p+\beta p} \frac{1}{k}
&=\left(\frac{n}{i}\right)^{ p+\beta p} \frac{1}{i}+\sum_{k=i+1}^\infty\left(\frac{n}{k}\right)^{ p+\beta p} \frac{1}{k}\le  \left(\frac{n}{i}\right)^{ p+\beta p} \frac{1}{i} +\sum_{k=i+1}^\infty n^{ p+\beta p} \int_{k-1}^k\frac{1}{t^{p+\beta p+1}}dt\\\nonumber
&\le  \left(\frac{n}{i}\right)^{ p+\beta p} \frac{1}{i} + n^{ p+\beta p} \int_{i}^\infty \frac{1}{t^{p+\beta p+1}}dt
= \left(\frac{n}{i}\right)^{ p+\beta p} \frac{1}{i} + \frac{1}{p+\beta p}\left(\frac{n}{i}\right)^{ p+\beta p}  \\
&= \left(\frac{n}{i}\right)^{ p+\beta p} \left(\frac{1}{i} + \frac{1}{p+\beta p}\right). \end{align}
By using the Fubini, the  estimates \eqref{m1} and \eqref{m2}, we get the following
\begin{align}\label{m5}
\nonumber\sum_{k=n}^\infty\left(\frac{n}{k}\right)^{ p+\beta p} \frac{1}{k}\sum_{i=1}^k \left(\frac{i}{n}\right)^{ \beta p}w(i)
&\le\sum_{k=n}^\infty\left(\frac{n}{k}\right)^{ p+\beta p} \frac{1}{k}\sum_{i=1}^n \left(\frac{i}{n}\right)^{ \beta p}w(i)\\\nonumber 
&+\sum_{i=n}^\infty\left(\sum_{k=i}^\infty \left(\frac{n}{k}\right)^{ p+\beta p} \frac{1}{k}\right) \left(\frac{i}{n}\right)^{ \beta p}w(i)\\\nonumber
&\le \left(\frac{1}{n}+\frac{1}{p+\beta p}\right)\sum_{i=1}^n \left(\frac{i}{n}\right)^{ \beta p}w(i)\\\nonumber
&+  \sum_{i=n}^\infty \left(\frac{1}{i}+\frac{1}{p+\beta p}\right)\left(\frac{n}{i}\right)^{ p+\beta p}
\left(\frac{i}{n}\right)^{ \beta p} w(i)
\\\nonumber
&\le 2\max\lge\frac{1}{p+\beta p},1\rge \lr\sum_{i=1}^n\lr \frac{i}{n}\rr^{ \beta p}w(i)
+  \sum_{i=n}^\infty\left(\frac{n}{i}\right)^{ p} w(i)\rr\\
&\le 2(c+1)\max\lge\frac{1}{p+\beta p},1 \rge\sum_{i=1}^n\left(\frac{i}{n}\right)^{\beta p}w(i).
\end{align}
The estimate \eqref{m5} can also be written as 
\begin{equation}\label{m8}
n^{p+\beta p}\sum_{k=n}^\infty\frac{1}{k^{ p+\beta p+1}} \sum_{i=1}^k i^{ \beta p}w(i)\le 2C\sum_{k=1}^n k^{\beta p}w(k),
\end{equation}
where $C=(c+1)\max\lge\frac{1}{p+\beta p},1 \rge.$\\
Let $Tf(n)=n^{p+\beta p}\displaystyle\sum_{k=n}^\infty\frac{1}{k^{ p+\beta p+1}}f(k)$, where $f(k)=\displaystyle\sum_{i=1}^k i^{\beta p}w(i).$ 
So that,  the inequality \eqref{m8} gives
\[Tf(n) \le 2C f(n),~~ \text{for all} ~~n. \]
On iterating the above inequality $m$-times, we obtain 
\[\underbrace{T\circ\cdots\circ T}_{m} \left(\sum_{i=1}^n i^{\beta p}w(i)\right)\le \lr 2C\rr^m\sum_{k=1}^n k^{\beta p}w(k) \]
\begin{equation}\label{m9}
n^{p+\beta p}\sum_{k=n}^\infty\ln\left(\frac{k}{n}\right)^m (m!)^{-1}\frac{1}{k^{ p+\beta p+1}} \sum_{i=1}^k i^{ \beta p}w(i)\le (2C)^m\sum_{k=1}^n k^{\beta p}w(k).
\end{equation}
Choose $\varepsilon>0$ such that  $ \varepsilon 2 C< 1.$ Then using \eqref{m9} we obtain
\begin{align}\label{10m}
\nonumber n^{p+\beta p}\sum_{k=n}^\infty\left(\frac{k}{n}\right)^\varepsilon \frac{1}{k^{ p+\beta p+1}} \sum_{i=1}^k i^{ \beta p}w(i) & = n^{p+\beta p}\sum_{k=n}^\infty\exp\left(\varepsilon \ln\left(\frac{k}{n}\right)\right) \frac{1}{k^{ p+\beta p+1}} \sum_{i=1}^k i^{ \beta p}w(i)\\\nonumber
 &=  n^{p+\beta p}\sum_{k=n}^\infty \sum_{m=0}^\infty \frac{\varepsilon^m \left( \ln\left(\frac{k}{n}\right)\right)^m}{m!}\frac{1}{k^{ p +\beta p +1}} \sum_{i=1}^k i^{ \beta p}w(i)\\\nonumber
&\le \sum_{m=0}^\infty  (\varepsilon 2C)^{m} \sum_{k=1}^n k^{ \beta p}w(k)\\
&= \frac{1}{1-\varepsilon2C}\sum_{k=1}^n k^{\beta p}w(k). 
\end{align}
Now, by using the following estimate
    \[k^{ -p -\beta p +\varepsilon} \lesssim \sum_{i=k}^\infty \frac{1}{i^{ p+\beta p-\varepsilon+1}},\]
Fubini theorem and \eqref{10m}, we get
    \begin{align*}
    n^{p+\beta p-\varepsilon}\sum_{k=n}^\infty 
    \frac{1}{k^{ p-\varepsilon}} w(k) & \lesssim n^{p+\beta p-\varepsilon}
    \sum_{k=n}^\infty \left(\sum_{i=k}^\infty \frac{1}{i^{ p+\beta p-\varepsilon+1}}\right) k^{\beta p}w(k) \\
    & \lesssim n^{p+\beta p-\varepsilon}\sum_{i=n}^\infty  
    \frac{1}{i^{ p+\beta p-\varepsilon+1}}\sum_{k=1}^i  k^{\beta p}w(k) \\
        &\lesssim \sum_{k=1}^n k^{\beta p}w(k)\\
    &\lesssim n^{\beta\varepsilon} \sum_{k=1}^n k^{\beta( p-\varepsilon)}w(k).
\end{align*}
Thus, we obtain
\begin{align*}\sum_{k=n}^\infty \left(\frac{n}{k}\right)^{ p-\varepsilon} w(k) 
    &\lesssim n^{\beta(\varepsilon-p) } \sum_{k=1}^n k^{\beta( p-\varepsilon)}w(k)\\ &  \lesssim\sum_{k=1}^n \lr\frac{k}{n}\rr^{\beta( p-\varepsilon)}w(k). 
    \end{align*}
Hence we have proved that $$\{w(k)\}_{k\in \mathbb{Z}^+}\in \mathcal{QB}_{\beta,p-\varepsilon}.$$
\end{proof}
\vspace{0.1cm} 
\begin{remark}
Lemma \ref{lm34} is an improvement of Theorem 2.6 \cite{sa}.  In Lemma \ref{lm34}, on taking $\beta=0$ yields  one of the important properties of the  Ari\~{n}o -Muckenhoupt class of weights $\mathcal B_p.$ In  \cite{sa}, authors  proved the Lemma in this case under an additional assumption that the  weight sequence is non-increasing. Here we have proved it without such an assumption. Below we state the Lemma \ref{lm34} explicitly  for $\beta=0$ i.e., for the $\mathcal B_p$ class of weights.
\end{remark}
\begin{lemma}\label{la41}
Let $ p>0$ be given and $\{w(k)\}_{k\in \mathbb{Z}^+} \in \mathcal{B}_{p}$ with constant $c$. Then there exists $\varepsilon, \,\, 0<\varepsilon<  \frac{1}{2(c+1) \max\lge \frac{1}{p}, 1\rge}$ such that $\{w(k)\}_{k\in \mathbb{Z}^+}\in \mathcal{B}_{p-\varepsilon}.$
\end{lemma}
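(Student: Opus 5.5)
This is the case $\beta=0$ of Lemma \ref{lm34}. When $\beta=0$ the weight class $\mathcal{QB}_{0,p}$ is exactly the Ari\~{n}o--Muckenhoupt class $\mathcal{B}_p$, because $(k/n)^{\beta p}=1$ and the defining inequality becomes $\sum_{k=n}^\infty (n/k)^p w(k)\le c\sum_{k=1}^n w(k)$. The admissible range $0<\varepsilon<\frac{1}{2(c+1)\max\{\frac{1}{p+\beta p},1\}}$ becomes $0<\varepsilon<\frac{1}{2(c+1)\max\{\frac1p,1\}}$. So the plan is to take $\beta=0$ in Lemma \ref{lm34}, which is allowed since $0>-1$, and check that nothing in that proof breaks in this case.

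For completeness, I would also recall the steps of the argument with $\beta=0$. Starting from $\sum_{k=1}^n w(k)+\sum_{k\ge n}(n/k)^p w(k)\le (c+1)\sum_{k=1}^n w(k)$, we use the tail estimate \eqref{m2} with exponent $p$ together with Fubini's theorem (Lemma H). This gives
$$Tf(n):=n^p\sum_{k\ge n}k^{-p-1}f(k)\le 2Cf(n), \qquad f(k)=\sum_{i\le k}w(i), \quad C=(c+1)\max\{1/p,1\}.$$
Iterating $T$ $m$ times produces the kernel $(\ln(k/n))^m/m!$, up to constants controlled by the logarithmic Lemma above. Summing the exponential series $\sum_m \varepsilon^m(\ln(k/n))^m/m!=(k/n)^\varepsilon$ yields \eqref{10m}, and the series converges because $2C\varepsilon<1$. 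Finally, writing $k^{-p+\varepsilon}\lesssim\sum_{i\ge k}i^{-(p-\varepsilon)-1}$, which follows from Power Rule II (Lemma F) applied to $f\equiv 1$, and applying Fubini once more gives
$$\sum_{k\ge n}(n/k)^{p-\varepsilon}w(k)\lesssim\sum_{k=1}^n w(k),$$
that is, $w\in\mathcal{B}_{p-\varepsilon}$.

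The only point that needs checking is that the case $\beta=0$ introduces no degeneracy. The factor $n^{\beta\varepsilon}$ in the last step of Lemma \ref{lm34} becomes $1$, so that comparison is trivial. The estimate $k^{-p+\varepsilon}\lesssim\sum_{i\ge k} i^{-(p-\varepsilon)-1}$ needs $p-\varepsilon>0$. This holds because $\varepsilon<\frac{1}{2(c+1)\max\{1/p,1\}}\le \frac{p}{2}<p$. No non-increasing assumption on $w$ is used anywhere, which is the stated improvement over \cite{sa}.
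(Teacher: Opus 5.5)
Your proposal is correct and matches the paper: Lemma \ref{la41} is stated there without a separate proof, as the $\beta=0$ case of Lemma \ref{lm34}, and your checks that $p-\varepsilon>0$ (since $\varepsilon<p/2$) and that the factor $n^{\beta\varepsilon}$ becomes $1$ are the right ones. One bookkeeping remark: if you take the lower bound for the iterated kernel from the logarithmic lemma, each iteration loses a factor $2$, so the exponential series needs $4C\varepsilon<1$ rather than $2C\varepsilon<1$ (comparing $\sum_{j=n}^{k} j^{-1}(\ln(k/j))^{m-1}$ directly with the corresponding integral avoids this loss), but this is harmless because the lemma only asserts that some admissible $\varepsilon$ exists.
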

 
 \begin{lemma}\label{lemb}
 	Let $\beta \ge 0, ~p_0>0$ and $0<\varepsilon<p_0(\beta +1).$ Then
 	\begin{equation*}
 		n^{p_0(1+\beta)-\varepsilon} \le \tilde{c} \sum_{k=1}^n k^{p_0(1+\beta)-\varepsilon-1},
 	\end{equation*}
 	where $\tilde{c}:= \max\{1, ~p_0(1+\beta)-\varepsilon\}.$
 \end{lemma}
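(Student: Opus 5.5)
This is a direct specialization of the Power Rule I (Lemma E) with the constant sequence. Set $q := p_0(1+\beta)-\varepsilon$. The hypothesis $0<\varepsilon<p_0(\beta+1)$ gives $q>0$, so $0\le q<\infty$ and Lemma E applies.

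Take $f(k)=1$ for all $k\in\mathbb{Z}^+$ in Lemma E. Then $\sum_{k=1}^n f(k)=n$ and $\sum_{j=1}^k f(j)=k$. The right-hand inequality of Lemma E therefore reads
\[
n^{q}\le \max\{1,q\}\sum_{k=1}^{n}k^{q-1}.
\]
This is exactly the particular case already recorded after Lemma E in the paper. Substituting back $q=p_0(1+\beta)-\varepsilon$ gives
\[
n^{p_0(1+\beta)-\varepsilon}\le \max\{1,\,p_0(1+\beta)-\varepsilon\}\sum_{k=1}^n k^{p_0(1+\beta)-\varepsilon-1},
\]
which is the claim with $\tilde c=\max\{1,\,p_0(1+\beta)-\varepsilon\}$.

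If one prefers not to rely on Lemma E, the estimate can be checked directly by cases.
\begin{itemize}
\item[(a)] For $q\ge1$, the function $t\mapsto t^{q-1}$ is non-decreasing. Hence $\sum_{k=1}^n k^{q-1}\ge \int_0^n t^{q-1}\,dt=n^q/q$, that is, $n^q\le q\sum_{k=1}^n k^{q-1}$.
\item[(b)] For $0<q<1$, the function $t\mapsto t^{q-1}$ is decreasing. Each term satisfies $k^{q-1}\ge n^{q-1}$ for $k\le n$, so the sum is at least $n\cdot n^{q-1}=n^q$.
\end{itemize}
Both cases give the bound with constant $\max\{1,q\}$.

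There is no real obstacle here. The only point to verify is that the hypothesis on $\varepsilon$ makes the exponent $q$ positive. This positivity is what makes Lemma E (or the case analysis above) applicable.
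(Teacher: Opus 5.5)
Your proof is correct and is exactly the paper's argument: the paper also applies Power Rule I (Lemma E) with $f\equiv 1$ and $q=p_0(\beta+1)-\varepsilon$, which is positive by the hypothesis on $\varepsilon$. Your additional direct case check (integral comparison for $q\ge 1$, monotonicity for $0<q<1$) is also valid and makes the argument self-contained.
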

 \begin{proof}
  The proof follows from Power rule I on taking  $q= p_0(\beta+1)- \varepsilon$.
 	\end{proof}
 Below we give an important intermediary required to prove our main result on extrapolation.
 \begin{proposition}\label{lemc}
 	Let $\phi$ be a non-decreasing function defined on $(0,\infty),~ \beta \ge 0$ and $p_0>0$ be given. Suppose that for all weight sequences $\{w(k)\}_{k\in \mathbb{Z}^+}$ in $\mathcal{QB}_{\beta, p_0}$ the following holds
 	\begin{equation*}\label{eqa}
 		\sum_{k=1}^\infty f(k)w(k) \le \phi \lr[w]_{\mathcal{QB}_{\beta, p_0}}\rr\sum_{k=1}^\infty g(k)w(k),
 	\end{equation*}
 	for non-negative sequences $\{f(k)\}_{k\in \mathbb{Z}^+}$ and $\{g(k)\}_{k\in \mathbb{Z}^+}.$ Then for every $0<\varepsilon<p_0(\beta +1),$ we have the following
 	\begin{equation*}
 		\sum_{k=1}^n f(k)k^{p_0-1-\varepsilon} \le \phi(c(\beta, p_0, \varepsilon))\sum_{k=1}^n g(k)k^{p_0-1-\varepsilon},  ~ n\in \mathbb{Z}^+,
 	\end{equation*} 
 	where $c(\beta, p_0, \varepsilon):= \left(1+\frac{1}{\varepsilon}\right)\max\{p_0(1+\beta)-\varepsilon, 1\}.$
 \end{proposition}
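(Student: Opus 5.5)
The plan is to test the hypothesis against one explicit weight for each fixed $n\in\mathbb{Z}^+$, namely the truncated power weight
\[
w_n(k):=\begin{cases} k^{p_0-1-\varepsilon}, & 1\le k\le n,\\ 0, & k>n.\end{cases}
\]
If $w_n\in\mathcal{QB}_{\beta,p_0}$ with $[w_n]_{\mathcal{QB}_{\beta,p_0}}\le c(\beta,p_0,\varepsilon)$, the proposition follows at once. Indeed, the hypothesis applied to $w_n$ gives
\[
\sum_{k=1}^n f(k)k^{p_0-1-\varepsilon}=\sum_{k=1}^\infty f(k)w_n(k)\le \phi\lr[w_n]_{\mathcal{QB}_{\beta,p_0}}\rr\sum_{k=1}^n g(k)k^{p_0-1-\varepsilon}.
\]
Since $\phi$ is non-decreasing, we may then replace $\phi([w_n])$ by $\phi(c(\beta,p_0,\varepsilon))$. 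So the whole proof reduces to estimating the $\mathcal{QB}_{\beta,p_0}$ constant of $w_n$ uniformly in $n$.

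To do this, fix $m\in\mathbb{Z}^+$ and compare the two sides of the $\mathcal{QB}_{\beta,p_0}$ condition.

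\emph{Case $m>n$.} The tail $\sum_{k\ge m}(m/k)^{p_0}w_n(k)$ vanishes, so the condition holds trivially.

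\emph{Case $m\le n$.} For the tail, split off the term $k=m$ and apply Lemma \ref{lema}\eqref{it1}:
\[
\sum_{k=m}^\infty\lr\frac{m}{k}\rr^{p_0}w_n(k)\le m^{p_0}\sum_{k=m}^\infty k^{-1-\varepsilon}\le m^{p_0}\lr m^{-1-\varepsilon}+\frac{1}{\varepsilon m^{\varepsilon}}\rr\le\lr1+\frac1\varepsilon\rr m^{p_0-\varepsilon}.
\]
For the head, Lemma \ref{lemb} applies because $0<\varepsilon<p_0(\beta+1)$, and gives
\[
\sum_{k=1}^m\lr\frac{k}{m}\rr^{\beta p_0}w_n(k)=m^{-\beta p_0}\sum_{k=1}^m k^{p_0(1+\beta)-\varepsilon-1}\ge \frac{m^{-\beta p_0}\,m^{p_0(1+\beta)-\varepsilon}}{\max\{1,p_0(1+\beta)-\varepsilon\}}=\frac{m^{p_0-\varepsilon}}{\max\{1,p_0(1+\beta)-\varepsilon\}}.
\]
Dividing, the tail is at most $(1+\frac1\varepsilon)\max\{p_0(1+\beta)-\varepsilon,1\}=c(\beta,p_0,\varepsilon)$ times the head, uniformly in $m$ and $n$.

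The main obstacle is purely bookkeeping in the definition of $[w]_{\mathcal{QB}_{\beta,p_0}}$. That definition puts the head sum on both sides, so the direct estimate above only gives $[w_n]\le 1+c(\beta,p_0,\varepsilon)$, not $c(\beta,p_0,\varepsilon)$. I would first try to absorb the extra $1$ by sharpening one of the two estimates. On the tail side, the term $k=m$ of the tail coincides with the term $k=m$ of the head, and the crude bound $m^{-1-\varepsilon}\le m^{-\varepsilon}$ leaves visible slack. On the head side, the Power Rule I bound of Lemma \ref{lemb} is not sharp. If no clean absorption works, I would state the result with constant $1+c(\beta,p_0,\varepsilon)$. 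This changes nothing structural, since $\phi$ is monotone and the constant only has to be finite and independent of $n$ for the subsequent extrapolation argument.
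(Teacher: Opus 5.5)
Your argument is the paper's argument. The paper tests the hypothesis on $w(k)=v(k)k^{p_0-1-\varepsilon}$ with $v$ non-negative and non-increasing. It proves tail $\le c(\beta,p_0,\varepsilon)\cdot$head using Lemma \ref{lema}\eqref{it1} and Lemma \ref{lemb} exactly as you do; the monotonicity of $v$ replaces your case split $m\le n$ versus $m>n$. It then specializes to $v=\chi_n$, which is your $w_n$.

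Your bookkeeping worry is legitimate, and the paper does not resolve it. It passes from tail $\le c\cdot$head directly to $[w]_{\mathcal{QB}_{\beta,p_0}}\le c(\beta,p_0,\varepsilon)$, but the normalized definition only yields $1+c(\beta,p_0,\varepsilon)$.

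Do not spend time on the absorption, because it fails in general. At $m=1$ the head is $w_n(1)=1$ and the tail is $\sum_{k=1}^n k^{-1-\varepsilon}$. Since $\sum_{k=1}^\infty k^{-1-\varepsilon}>\int_1^\infty t^{-1-\varepsilon}\,dt=1/\varepsilon$, for large $n$ you get
\[
[w_n]_{\mathcal{QB}_{\beta,p_0}}\ge 1+\sum_{k=1}^n k^{-1-\varepsilon}>1+\frac{1}{\varepsilon}=c(\beta,p_0,\varepsilon)
\]
whenever $p_0(1+\beta)-\varepsilon\le 1$. For instance, with $p_0=1$, $\beta=0$, $\varepsilon=1/2$ one has $c=3$, and the bound exceeds $3$ for $n\ge 11$.

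So your fallback is the correct conclusion. The proposition holds with $\phi(1+c(\beta,p_0,\varepsilon))$, or with $\phi(c(\beta,p_0,\varepsilon))$ if $c$ is read as the constant in the unnormalized $\mathcal{QB}_{\beta,p_0}$ inequality. As you note, this only changes constants in the extrapolation theorem.
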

\begin{proof}
	Suppose $\{f(k)\}_{k\in \mathbb{Z}^+}$ and $\{g(k)\}_{k\in \mathbb{Z}^+}$ are non-negative sequences. Take a particular type of weight sequence  $w(k)= v(k)k^{p_0-1-\varepsilon},$ where $\{v(k)\}_{k\in \mathbb{Z}^+}$ is a non-negative non-increasing sequence. We shall prove that $\{w(k)\}_{k\in \mathbb{Z}^+}\in \mathcal{QB}_{\beta, p_0}.$
	Using the fact that $\{v(k)\}_{k\in \mathbb{Z}^+}$ is non-increasing and applying Lemma \ref{lemb},  we have
	\begin{align*}
		n^{p_0(1+\beta)}\sum_{k=n}^\infty w(k)\left(\frac{1}{k}\right)^{p_0}
		& = n^{p_0(1+\beta)}\sum_{k=n}^\infty \left(\frac{1}{k}\right)^{p_0}k^{p_0-1-\varepsilon}v(k) \nonumber\\
		& = n^{p_0(1+\beta)}\sum_{k=n}^\infty k^{-(1+\varepsilon)}v(k) \\
		& \le n^{p_0(1+\beta)}v(n)\left(\sum_{k=n+1}^\infty \left(\frac{1}{k} \right)^{1+\varepsilon}+\left(\frac{1}{n} \right)^{1+\varepsilon} \right) \\
		& \le n^{p_0(1+\beta)}v(n)\left( \frac{1}{\varepsilon n^\varepsilon} + \frac{1}{n^{1+\varepsilon}}\right) \nonumber \\
		& = n^{p_0(1+\beta)-\varepsilon}v(n)\left( \frac{1}{\varepsilon} + \frac{1}{n}\right) \\
		& \le n^{p_0(1+\beta)-\varepsilon}v(n)\left( 1+ \frac{1}{\varepsilon} \right) \\
		& \le \tilde{c}\left( 1+ \frac{1}{\varepsilon} \right)v(n)\sum_{k=1}^n k^{p_0(1+\beta)-\varepsilon-1} \\
		& \le \tilde{c}\left( 1+ \frac{1}{\varepsilon} \right)\sum_{k=1}^n k^{p_0\beta}k^{p_0-1-\varepsilon}v(k) \\
		& = c(\beta, p_0, \varepsilon) \sum_{k=1}^n k^{p_0\beta}w(k),
	\end{align*}
	where $c(\beta, p_0, \varepsilon) := \left( 1+ \frac{1}{\varepsilon} \right)\tilde{c}= \left( 1+ \frac{1}{\varepsilon} \right)\max\{p_0(1+\beta)-\varepsilon, 1\}.$
	Therefore $\{w(k)\}_{k\in \mathbb{Z}^+}\in \mathcal{QB}_{\beta, p_0},$ with constant $c(\beta, p_0, \varepsilon),$ for every non-negative non-increasing sequence $\{v(k)\}_{k\in \mathbb{Z}^+}.$ Thus, in particular, on taking $v(k)= \{\chi_n(k)\}_{k\in \mathbb{Z}^+},$ for $\{w(k):= k^{p_0-\varepsilon-1}\chi_n(k)\}_{k\in \mathbb{Z}^+},$ which is in $\mathcal{QB}_{\beta, p_0}$ as shown above, by  (\ref{eqa}) we have 
	
	$$\sum_{k=1}^\infty f(k)k^{p_0-\varepsilon-1}\chi_n (k) \le \phi \lr[w]_{\mathcal{QB}_{\beta, p_0}}\rr \sum_{k=1}^\infty g(k)k^{p_0-\varepsilon-1}\chi_n (k). $$
	Since $[w]_{\mathcal{QB}_{\beta, p_0}} \le c(\beta, p_0, \varepsilon)$ and $\phi$ is a non-decreasing function, the above gives
	\[
	\sum_{k=1}^n f(k)k^{p_0-\varepsilon-1} \le \phi (c(\beta, p_0, \varepsilon)) \sum_{k=1}^n g(k)k^{p_0-\varepsilon-1}, ~n\in \mathbb{Z}^+.
	\]
	Hence, the proof is done.
\end{proof}
\begin{theorem}
	Let $\phi$ be a non-negative, non-decreasing function defined on $(0,\infty).$ Let $p_0 \ge 2$ and $\beta \ge 0$ be given. Suppose that for every $\{w(k)\}_{k\in \mathbb{Z}^+} \in \mathcal{QB}_{\beta, p_0},$ we have the following 
	\begin{equation*}
		\sum_{k=1}^\infty f^{p_0}(k)w(k) \le \phi \lr[w]_{\mathcal{QB}_{\beta, p_0}}\rr\sum_{k=1}^\infty g^{p_0}(k)w(k)
	\end{equation*}
	for non-negative sequences $\{f(k)\}_{k\in \mathbb{Z}^+}, \{g(k)\}_{k\in \mathbb{Z}^+} \in \mathcal{Q}_\beta .$ Then, for every  $p \ge p_0$ and every $\{w(k)\}_{k\in \mathbb{Z}^+} \in \mathcal{QB}_{\beta, p},$ the following inequality holds:
	\begin{equation}\label{eq23}
		\sum_{k=1}^\infty f^p(k)w(k) \le \tilde{\phi}(p_0, \beta, \phi) \sum_{k=1}^\infty g^p(k)w(k),
	\end{equation}
	where $\tilde{\phi}(p_0, \beta, \phi):= C \displaystyle \inf_{0<\varepsilon\le p_0-1}c(p_0, \beta, \varepsilon),$ and $C$ is a positive constant.
\end{theorem}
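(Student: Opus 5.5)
We will follow the Carro--Lorente scheme from Theorem C: convert the $p$-weighted inequality into a $p_0$-weighted inequality for an auxiliary weight built from $w$ and $g$, and show that this auxiliary weight is in $\mathcal{QB}_{\beta,p_0}$ with controlled constant. Fix $p\ge p_0$, $\{w(k)\}\in\mathcal{QB}_{\beta,p}$ and $f,g\in\mathcal Q_\beta$; we may assume $\sum_k g^p(k)w(k)<\infty$. Write $r=p/p_0\ge 1$.

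\textbf{Step 1 (factoring).} Write $f^p w=f^{p_0}\,(f^{p-p_0}w)$. The aim is to replace $f^{p-p_0}$ by a comparable $g$-dependent quantity. We will use the auxiliary weight
\[
W(k):=w(k)\,h(k)^{p-p_0},
\]
where $h$ is a Rubio de Francia-type majorant built from $g$ and $f$, for instance $h=\sum_{j\ge0}(2\|\mathcal A\|)^{-j}\mathcal A^j(\cdot)$, with $\mathcal A$ the discrete averaging operator. Corollary \ref{s8} gives the boundedness of $\mathcal A$ on $\mathcal Q_\beta\cap l^p_w$, and $\mathcal A$ maps $\mathcal Q_\beta$ into $\mathcal Q_\beta$ when $\beta\ge0$, so $h\in\mathcal Q_\beta$. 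We also get $f,g\le h$ and $\sum_k h^pw\lesssim\sum_k(f^p+g^p)w$. Then Hölder with exponents $r,r'$ gives
\[
\sum_k f^pw\le\sum_k f^{p_0}h^{p-p_0}w .
\]
Once $W\in\mathcal{QB}_{\beta,p_0}$ is established, the hypothesis applies, and a second Hölder gives
\[
\sum_k g^{p_0}h^{p-p_0}w\le\Bigl(\sum_k g^pw\Bigr)^{1/r}\Bigl(\sum_k h^pw\Bigr)^{1/r'} .
\]
Absorbing the factor $\bigl(\sum_k h^pw\bigr)^{1/r'}$ to the left then yields \eqref{eq23}. This needs an a priori finiteness of $\sum_k f^pw$, which we obtain by truncating $f$ at level $N$ and $k\le N$ and then passing to the limit.

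\textbf{Step 2 (the key obstacle).} We must show that $W=wh^{p-p_0}$ lies in $\mathcal{QB}_{\beta,p_0}$ with a constant depending only on $[w]_{\mathcal{QB}_{\beta,p}}$ and $p_0,\beta$. Since $h\in\mathcal Q_\beta$, we have $h(k)\le (k/n)^\beta h(n)$ for $k\ge n$ and $h(k)\ge (k/n)^\beta h(n)$ for $k\le n$. Hence
\[
\sum_{k\ge n}\Bigl(\frac nk\Bigr)^{p_0}W(k)\le h(n)^{p-p_0}\sum_{k\ge n}\Bigl(\frac nk\Bigr)^{p_0-\beta(p-p_0)}w(k),
\]
\[
\sum_{k\le n}\Bigl(\frac kn\Bigr)^{\beta p_0}W(k)\ge h(n)^{p-p_0}\sum_{k\le n}\Bigl(\frac kn\Bigr)^{\beta p}w(k).
\]
So it suffices that $w\in\mathcal{QB}$ with the tail exponent $p_0-\beta(p-p_0)$ and the head exponent $\beta p$. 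This is weaker than the information we have when $\beta=0$, but for $\beta>0$ the tail exponent drops. Here I would use the open-ended property Lemma \ref{lm34}, iterated, to lower the tail exponent. Where iteration is insufficient, I would instead use Proposition \ref{lemc} with test weights $k^{p_0-1-\varepsilon}\chi_n$, reducing everything to an inequality between partial sums, and then pass to general weights via the Partial Sums Lemma G and Fubini (Lemma H). This is how the constant $c(p_0,\beta,\varepsilon)$ with $0<\varepsilon\le p_0-1$ enters the final bound, and $p_0\ge2$ guarantees that this range is nonempty and that the exponent $p_0-1-\varepsilon$ stays admissible.

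\textbf{Step 3.} Collect the constants: $\phi([W])\le\phi(c(p_0,\beta,\varepsilon))$ by monotonicity of $\phi$, multiplied by powers of $\|\mathcal A\|$ from Corollary \ref{s8} and Theorem \ref{th1}. Taking the infimum over $\varepsilon$ gives $\tilde\phi$. I expect the main difficulty to lie in Step 2, namely controlling the $\mathcal{QB}_{\beta,p_0}$ constant of $W$ when $\beta>0$.
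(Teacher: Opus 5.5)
There is a genuine gap, and it sits exactly where you expect it. In Step 2, the claim that $W=wh^{p-p_0}$ lies in $\mathcal{QB}_{\beta,p_0}$ with constant controlled by $[w]_{\mathcal{QB}_{\beta,p}}$ is false, so the scheme cannot close.

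Your own reduction points to the problem, but you read it backwards. Lowering the tail exponent makes a $\mathcal{QB}$ condition \emph{stronger}: for $k\ge n$, $(n/k)^q$ increases as $q$ decreases, and already $\mathcal B_{p_0}\subsetneq\mathcal B_p$. So, even when $\beta=0$, you would need to pass from exponent $p$ down to $p_0-\beta(p-p_0)$. That is a fixed drop of $(1+\beta)(p-p_0)$; for $\beta>0$ and $p>p_0(1+\beta)/\beta$ the target exponent is even negative. Lemma \ref{lm34} only lowers the exponent by a small $\sigma$ depending on $[w]_{\mathcal{QB}_{\beta,p}}$. No iteration of it can reach $p_0$ in general, since $k^a\in\mathcal B_p\setminus\mathcal B_{p_0}$ whenever $p_0-1\le a<p-1$.

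This is not an artifact of your crude bounds. Take $\beta=0$, $w(k)=k^a$ with $p_0-1<a<p-1$, and $f=g=\chi_{\{1,\dots,N\}}$.
\begin{itemize}
\item At $k\le N$, $\mathcal A$ only sees values on $\{1,\dots,k\}$. So every $\mathcal A^j u$, with $u$ built from $f$ and $g$, is constant on $\{1,\dots,N\}$.
\item Hence $h$ is constant there, and $W$ is a fixed multiple of $w$ on $\{1,\dots,N\}$.
\item Testing the $\mathcal B_{p_0}$ condition at $n=1$ gives $[W]_{\mathcal B_{p_0}}\gtrsim\sum_{k=1}^N k^{a-p_0}\gtrsim N^{a-p_0+1}\to\infty$.
\end{itemize}
So $\phi([W])$ cannot be bounded in terms of $[w]$. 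As in the classical theory, the weight $wh^{p-p_0}$ is the right device for $p<p_0$, not for $p>p_0$.

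Your fallback does not repair this. Proposition \ref{lemc} combined with Lemma G only yields the inequality for weights $k^{p_0-1-\varepsilon}v(k)$ with $v$ non-increasing. It says nothing about membership of $W$ in $\mathcal{QB}_{\beta,p_0}$.

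The missing idea is that, because $k^{-\beta}f(k)$ is non-increasing, no weight built from $f$ and $g$ is needed at all. The paper's route is as follows.
\begin{enumerate}
\item It applies the hypothesis only to the test weights $k^{p_0-1-\varepsilon}\chi_{\{1,\dots,n\}}$ (Proposition \ref{lemc}). With $\psi(k)=k^{p_0-1-\varepsilon}$, this gives $\sum_{k\le n}f^{p_0}\psi\le\phi(c(\beta,p_0,\varepsilon))\sum_{k\le n}g^{p_0}\psi$ for every $n$.
\item Quasi-monotonicity of $f$ and Lemma \ref{lemb} give the pointwise bound $f^{p_0}(n)\le(p_0(1+\beta)-\varepsilon)\,\mathcal A_\psi(f^{p_0})(n)$. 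Combined with step 1, this yields $f^{p_0}\lesssim\phi(c)\,\mathcal A_\psi(g^{p_0})$ pointwise.
\item Raising to the power $p/p_0$ and summing against $w$ reduces the theorem to boundedness of $\mathcal A_\psi$ on quasi non-increasing sequences in $l^{p/p_0}_w$, which is Theorem \ref{th1}.
\item The weight condition of Theorem \ref{th1} follows from $w\in\mathcal{QB}_{\beta,(p_0-\varepsilon)p/p_0}$. Only here is Lemma \ref{lm34} used, and it is only asked to lower the exponent from $p$ to $p-\varepsilon p/p_0$, which it can do once $\varepsilon$ is small.
\end{enumerate}
This, not a Rubio de Francia iteration algorithm, is the Carro--Lorente scheme you meant to follow.
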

\begin{proof}
	Let $ p\ge  p_0$ and $ \{w(k)\}_{k\in \mathbb{Z}^+} \in \mathcal{QB}_{\beta, p}.$  Suppose that $\{f(k)\}_{k\in \mathbb{Z}^+}$ and $\{g(k)\}_{k\in \mathbb{Z}^+}$ are non-negative quasi non-increasing sequences. Since $\{f(k)\}_{k\in \mathbb{Z}^+} \in \mathcal{Q}_\beta,$ this implies that $\{h(k)= k^{-\beta}f(k)\}_{k\in \mathbb{Z}^+}$ is a non-increasing sequence. 
	
	\noindent Let $0<\varepsilon \le p_0-1,$ Using the fact that $h(k)$ is non-increasing and Lemma \ref{lemb}, we may estimate $h^{p_0}(k) $  as
	\begin{align}
		h^{p_0}(k) 
		&= \frac{k^{p_0(1+\beta)-\varepsilon}}{k^{p_0(1+\beta)-\varepsilon}}h^{p_0}(k) \nonumber \\
		& \le \frac{p_0(1+\beta)-\varepsilon}{k^{p_0(1+\beta)-\varepsilon}}\left( \sum_{\tau=1}^k \tau^{p_0(\beta+1)\varepsilon-1}\right)h^{p_0}(k) \nonumber \\
		& \le \frac{p_0(1+\beta)-\varepsilon}{k^{p_0(1+\beta)-\varepsilon}}\left( \sum_{\tau=1}^k \tau^{p_0(\beta+1)\varepsilon-1}h^{p_0}(\tau)\right) \label{eqx}.
	\end{align}
	Now, considering the LHS of \eqref{eq23}, by using \eqref{eqx}, Proposition \ref{lemc} and the Power rule I, we get 
	\begin{align}
		 \nonumber \sum_{k=1}^\infty f^p(k)w(k)
		& = \sum_{k=1}^\infty k^{\beta p}h^p(k)w(k)  \\\nonumber
		& = \sum_{k=1}^\infty \left( k^{\beta p_0}h^{p_0}(k) \right)^{p/p_0} \\\nonumber
		& \le \sum_{k=1}^\infty \lr \frac{p_0(1+\beta)-\varepsilon}{k^{p_0(1+\beta)-\varepsilon}}k^{\beta p_0} \left( \sum_{\tau=1}^k \tau^{p_0(1+\beta)-\varepsilon-1}h^{p_0}(\tau) \right) \rr^{p/p_0}w(k) \nonumber \\
		& = \left( p_0(1+\beta)-\varepsilon \right)^{p/p_0}\sum_{k=1}^\infty\lr \frac{1}{k^{p_0-\varepsilon}}\sum_{\tau=1}^k \tau^{p_0-\varepsilon-1}h^{p_0}(\tau)\tau^{p_0\beta}  \rr^{p/p_0}w(k)  \nonumber \\
		& = \left( p_0(1+\beta)-\varepsilon \right)^{p/p_0}\sum_{k=1}^\infty\lr\frac{1}{k^{p_0-\varepsilon}}\sum_{\tau=1}^k \tau^{p_0-\varepsilon-1}f^{p_0}(\tau)  \rr^{p/p_0}w(k)\nonumber \\\nonumber
        &\le \left( p_0(1+\beta)-\varepsilon \right)^{p/p_0}\left(\phi\left(\left(p_0(\beta+1)-\varepsilon \right)\left( 1+\frac{1}{\varepsilon}\right)\right)\right)^{p/p_0} \times\\
		& \hspace{1cm}\sum_{k=1}^\infty\lr \frac{1}{k^{p_0-\varepsilon}}\sum_{\tau=1}^k \tau^{p_0-\varepsilon-1}g^{p_0}(\tau)  \rr^{p/p_0}w(k) \nonumber \\
		& \le\lr\left( p_0(1+\beta)-\varepsilon \right)\max\{p_0-\varepsilon,1\}\phi\left(\left(p_0(\beta+1)-\varepsilon \right)\left( 1+\frac{1}{\varepsilon}\right)\right) \rr^{p/p_0} \times \nonumber\\\nonumber
		& \hspace{1cm} \sum_{k=1}^\infty\lr \frac{1}{\Psi(k)}\sum_{\tau=1}^k g^{p_0}(\tau)\psi(\tau) \rr^{p/p_0}w(k)\\
        &=c(p_0, p, \beta, \varepsilon)\sum_{k=1}^\infty \left( \mathcal{A}_\psi g^{p_0} \right)^{p/p_0}(k)w(k), \label{hdy}        
        \end{align}
	where $\psi(\tau)= \tau^{p_0-\varepsilon-1}$ and 
    $c(p_0, p, \beta, \varepsilon)= \lr\left( p_0(1+\beta)-\varepsilon \right)\max\{p_0-\varepsilon,1\}\phi\left(\left(p_0(\beta+1)-\varepsilon \right)\left( 1+\frac{1}{\varepsilon}\right)\right) \rr^{p/p_0}.$
     \vspace{3pt} 
 
Since $\{w(k)\}_{k\in \mathbb{Z}^+}  \in \mathcal{QB}_{\beta, p},$  by Lemma \ref{lm34} there exists $\sigma, ~0<\sigma<   \frac{1}{2~[w]_{\mathcal{QB}_{\beta, p}} \max\lge \frac{1}{p+\beta p}, 1\rge}$ such that $\{w(k)\}_{k\in \mathbb{Z}^+} \in \mathcal{QB}_{\beta,p-\sigma}.$ Since \eqref{hdy} holds for any $\varepsilon, ~ 0< \varepsilon \le p_0-1,$ we can choose $\varepsilon>0$ such that $\{w(k)\}_{k\in \mathbb{Z}^+} \in \mathcal{QB}_{\beta, (p_0-\varepsilon)\frac{p}{p_0}}.$
I.e.,
\begin{equation*}
\sum_{k=n}^\infty \left( \frac{n}{k}\right)^{(p_0-\varepsilon)\frac{p}{p_0}}w(k) \le C\sum_{k=1}^n \left( \frac{k}{n}\right)^{\beta(p_0-\varepsilon)\frac{p}{p_0}}w(k).
\end{equation*}
Or,
\begin{equation*}
n^{(p_0-\varepsilon+\beta)\frac{p}{p_0}}\sum_{k=n}^\infty \left( \frac{1}{k}\right)^{(p_0-\varepsilon)\frac{p}{p_0}}w(k) \le C \sum_{k=1}^n k^{\frac{\beta p}{p_0}}\left( \frac{k}{n}\right)^{\beta(p_0-\varepsilon-1)\frac{p}{p_0}}w(k).
\end{equation*}
Since $p_0-\varepsilon-1>0,$ we have
\begin{equation}
 n^{(p_0-\varepsilon+\beta)\frac{p}{p_0}}\sum_{k=n}^\infty \left( \frac{1}{k}\right)^{(p_0-\varepsilon)\frac{p}{p_0}}w(k) \le C \sum_{k=1}^n k^{\frac{\beta p}{p_0}}w(k) \label{eqx4}
\end{equation}
Now, the Power rule I gives
$$k^{p_0-\varepsilon} \le (p_0-\varepsilon)\sum_{k=1}^n k^{p_0-\varepsilon-1} = (p_0-\varepsilon)\Psi(k)$$ 
and
$$\sum_{k=1}^n k^\beta \psi(k) = \sum_{k=1}^n k^{p_0-\varepsilon+\beta-1} \le n^{p_0-\varepsilon+\beta}.$$
Therefore, \eqref{eqx4} can be written as 
\begin{equation*}\label{eqy3}
	\left( \sum_{k=1}^n k^\beta \psi(k) \right)^{\frac{p}{p_0}}\sum_{k=n}^\infty \Psi^{-\frac{p}{p_0}}(k)w(k) \le C(p_0-\varepsilon)^{\frac{p}{p_0}}\sum_{k=1}^n k^{\frac{\beta p}{p_0}}w(k).
\end{equation*}
Again, by Power rule I, we have
$$n^\beta \displaystyle\sum_{k=1}^n \psi(k) \le \lr\beta+p_0-\varepsilon\rr\displaystyle\sum_{k=1}^n k^\beta \psi(k). $$ \\
Hence in view of the Remark \ref{r3} and Theorem \ref{th1}, we have
\begin{equation}\label{eqy4}
	\sum_{k=1}^\infty \left( \left( \mathcal{A}_\psi g^{p_0}\right)(k)\right)^{p/p_0}w(k) \le C \sum_{k=1}^\infty g^p(k) w(k).
\end{equation}
On combining (\ref{hdy}) and (\ref{eqy4}), we have
\[
\sum_{k=1}^\infty f^p(k)w(k) \le c(p_0, p, \beta, \varepsilon)C \sum_{k=1}^\infty g^p(k)w(k).
\]
Setting $\tilde{\phi}(p_0, p, \beta, \varepsilon)= C \displaystyle\inf_{0<\varepsilon\le p_0-1}c(p_0, p, \beta, \varepsilon),$ so that the above inequality becomes
\[
\sum_{k=1}^\infty f^p(k)w(k) \le \tilde{\phi}(p_0, \beta, \phi) \sum_{k=1}^\infty g^p(k)w(k),
\]
which completes the proof.
\end{proof}

\bigskip

\noindent{\it Acknowledgment.} 
The research of Monika Singh was supported by the  National Board of Higher Mathematics, 
research project no. 02011/14/2023 NBHM(R.P)/R\&D II/5951, India.

\noindent The research  of Amiran Gogatishvili was partially supported by the grant project 23-04720S of the Czech Science Foundation, The Institute of Mathematics, CAS is supported  by RVO:67985840, by  Shota Rustaveli National Science Foundation (SRNSF), grant no: FR22-17770, and by the
grant Ministry of Education and Science of the Republic of Kazakhstan (project no.
AP14869887). 
\medskip

\noindent {\it Conflict of interest statement.} The authors state that there is no conflict of interest.


\end{document}